\newtheorem{theorem}{Theorem}[section]
\newtheorem{lemma}[theorem]{Lemma}
\newtheorem{corollary}[theorem]{Corollary}
\theoremstyle{definition}
\newtheorem{remark}[theorem]{Remark}
\newtheorem*{thma}{Theorem A}
\newtheorem*{thmb}{Theorem B}
\newtheorem*{thmc}{Theorem C}
\renewcommand{\epsilon}{\varepsilon}
\DeclareMathOperator{\tr}{Tr}
\DeclareMathOperator{\Sh}{S}
\newcommand{\dv}{\text{ }dV}
\newcommand{\N}{\mathbb{N}}
\newcommand{\R}{\mathbb{R}}
\newcommand{\CPn}{\mathbb{CP}^{n}}
\newcommand{\CHn}{\mathbb{CH}^{n}}
\numberwithin{equation}{section}
\title{Polyharmonic hypersurfaces into complex space forms}
\author{José Miguel Balado-Alves}
\address{WWU M\"unster, Mathematisches Institut\\
Einsteinstr. 62\\
48149 M\" unster\\
Germany}
\email{jose.balado@uni-muenster.de \\ ORCiD: 0009-0008-5324-2595}
\date{\today}
\subjclass[2010]{58E20, 53C43}
\keywords{$ r $-harmonic maps; Hopf hypersurfaces; stability}
\begin{document}

\begin{abstract}
We characterize homogeneous hypersurfaces in complex space forms which arise as critical points of a higher order energy functional. As a consequence, we obtain existence and non-existence results for $\mathbb{CP}^n$ and $\mathbb{CH}^n$, respectively. Moreover, we study the stability of biharmonic hypersurfaces and compute the normal index for a large family of solutions.
\end{abstract}

\maketitle

\section{Introduction}

\emph{Harmonic maps} are critical points of the \emph{energy functional}
\begin{equation*}
     E ( \psi ) = \frac{ 1 }{ 2 } \int_{ M } | d \psi |^2 \, \dv_g,
\end{equation*}
where $ \psi : M \to N $ is a smooth map between two Riemannian manifolds $ ( M, g ) $ and $ ( N, h ) $. Equivalently, $ \psi $ is harmonic if and only if it is a solution to the Euler--Lagrange equation associated with this functional, namely:
\begin{equation*}
    \tau ( \psi ) := \tr \nabla d \psi = 0.
\end{equation*}
The section $ \tau ( \psi ) \in \psi^* TN $ is called the \emph{tension field} of $ \psi $. In particular, if $\psi$ is an isometric immersion, it is harmonic if and only if $ \psi ( M ) $ is a minimal submanifold of $ N $. We refer the reader to the work of Eells and Lemaire~\cite{eells1983selected, eells1988another} for background on the theory of harmonic maps.

The study of higher-order functionals has already been proposed in~\cite{eells1983selected} as a generalization to the classic energy. In the last decades, this topic has raised the interest of many mathematicians, leading to intriguing results from both the analytic and the geometric perspective, see for instance~\cite{branding2021structure, branding2023pseudo, branding2020higher, montaldo2022polyharmonic, montaldo2023surfaces}. If $ r = 2 s $ and $ s \geq 1 $, we define the $ r $-energy functional as
\begin{equation*}
    E_{ 2 s } ( \psi ) := \frac{ 1 }{ 2 } \int_{ M } | \Delta^{ s - 1 } \tau ( \psi ) |^2 \, \dv_g.
\end{equation*}
If, on the other hand, $r=2s+1$, then
\begin{equation*}
    E_{ 2 s + 1 } ( \psi ) := \frac{ 1 }{ 2 } \int_{ M } \sum_{ j = 1 }^n | \nabla _{ e_j } \Delta^{ s - 1 } \tau ( \psi ) |^2 \, \dv_g,
\end{equation*}
where $ \{ e_i \}_{ i = 1 }^m $ is a local orthonormal frame on $ M $ and $ \Delta = d^* d $ denotes the rough Laplacian acting on sections of $ \psi^* TN $. A \emph{polyharmonic map of order $ r $} (in short, \emph{$ r $-harmonic}) is a critical point for the $ r $-energy functional. In particular, if $ \psi $ is an isometric immersion, we say that $ M $ is an \emph{$ r $-harmonic submanifold} of $ N $.  The biharmonic case ($r=2$) has shown itself to be of special interest, and we refer to~\cite{chen2014total, fetcu2010biharmonic, ou2020biharmonic} for an introduction to this field.

Note that any harmonic map is automatically polyharmonic of any order. If, on the contrary, a map is a critical point for the $ r $-energy functional but not for the classic energy, we refer to it as a \emph{proper} $ r $-harmonic map. Specifically, an $ r $-harmonic submanifold is said to be \emph{proper} if it is not minimal.

\smallskip
In this manuscript, we consider Hopf hypersurfaces in a complex space form $ ( N, h, J ) $. A hypersurface $ M $ of $ N $ is said to be \emph{Hopf} if $ - J \xi $ is an eigenvector for the shape operator of $ M $, that is to say, $ \Sh ( - J \xi ) = - \alpha J \xi $ where $ \xi $ is a local choice of unit normal vector and $ \alpha \in \mathbb{ R } $. Hopf hypersurfaces with constant principal curvatures in $ \CHn $ or $ \CPn $ have been classified in~\cite{berndt1989real, kimura1986real}: they are all open parts of homogeneous hypersurfaces.

More specifically, the main results we present here are the following.

\begin{thma}\label{thma}
    Let $ M $ be an open part of a homogeneous hypersurface in a complex space form $N^n(c)$, $ n \geq 2 $. Then $ M $ is proper $ r $-harmonic for $ r \geq 2 $ if and only if
    \begin{equation*}
        \tfrac{4}{c}( \tr \Sh^2 )^2 - 2 ( n + 1 ) ( \tr \Sh^2 ) = ( r - 2 ) \tr \Sh ( \tr \Sh + 3 \alpha ).
    \end{equation*}
\end{thma}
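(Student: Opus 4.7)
The plan is to exploit the strong rigidity of homogeneous Hopf hypersurfaces to reduce the $r$-harmonic Euler--Lagrange system to a single polynomial identity in the basic invariants $\tr \Sh$, $\tr \Sh^2$, $\alpha$, $c$, $n$, and $r$.

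First I would recall the Euler--Lagrange equation for the $r$-energy applied to an isometric immersion, which involves iterated rough Laplacians of $\tau(\psi)$ together with curvature correction terms coming from the ambient $R^N$. For a hypersurface with unit normal $\xi$, the tension field is $\tau(\psi)=-(\tr\Sh)\,\xi$. By the Berndt--Kimura classification, $M$ is an open part of a hypersurface with constant principal curvatures, so $\tr\Sh$, $\tr\Sh^2$, and $\alpha$ are all constant on $M$. In particular $|\tau(\psi)|^2$ is parallel, and this forces the tangential component of the $r$-harmonic equation---which would otherwise carry $\nabla(\tr\Sh)$ and $\Sh$-divergence-type terms---to vanish automatically. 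The problem therefore collapses to a single scalar equation along the normal direction.

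Next I would compute the action of the rough Laplacian on the constant normal section $\xi$, using the Weitzenb\"ock formula together with the explicit curvature tensor of the complex space form $N^n(c)$. Its curvature decomposes into a constant-sectional-curvature piece plus a holomorphic piece depending on $J$; when evaluated against $\xi$, the latter produces a $J\xi$-contribution. The Hopf assumption $\Sh(-J\xi)=-\alpha J\xi$ tells us how $\Sh$ acts on this tangent direction, so by induction $\Delta^k \xi$ remains confined to the constant two-dimensional subspace $\mathrm{span}(\xi, J\xi)$ and is governed by a $2\times 2$ transition matrix with entries polynomial in $\tr\Sh^2, \alpha, n, c$. Computing the $r$-tension field thus reduces to the $r$-th power of this matrix.

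Finally I would substitute these iterated Laplacians into $\tau_r(\psi)=0$ and project onto $\xi$. Because the recursion is affine in the iteration index after factoring out the case $r=2$, the resulting scalar identity splits into an $r$-independent part and a part proportional to $(r-2)$, producing the stated formula. The main obstacle is the bookkeeping in the previous step: the factor $\tfrac{4}{c}$ in the final formula arises from inverting the holomorphic-curvature coefficient that appears when the $J\xi$-term feeds back into itself under $\Delta$, and the $3\alpha$ coefficient on the right-hand side records the combinatorial weight of the three $J$-contributions in the complex space form curvature tensor when restricted to the Hopf direction. Once this $2\times 2$ recursion is pinned down with consistent sign and normalization conventions for $c$, the final algebraic simplification is routine.
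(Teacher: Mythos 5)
Your overall strategy matches the paper's: use constancy of the principal curvatures to reduce the Euler--Lagrange system to a single scalar identity, compute the iterated rough Laplacians of $\tau(i)=(\tr\Sh)\,\xi$, and substitute into the explicit formula for $\tau_r$. But the central step is mischaracterized in a way that constitutes a genuine gap. The rough Laplacian $\Delta=d^*d$ acting on sections of $i^*TN$ carries no Weitzenb\"ock curvature correction, and $\Delta^m\tau$ does not move into $\mathrm{span}(\xi,J\xi)$ via a $2\times 2$ transition matrix: the correct statement (the paper's Lemma~\ref{lemmaroughlaplacian}) is that $\tau$ is an exact eigenvector, $\Delta^m\tau=(\tr\Sh^2)^m\tau$, with no $J\xi$ feedback at all. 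Proving this requires showing that the \emph{entire} tangential part of $\Delta\tau$ vanishes, which is essentially the vanishing of $\operatorname{div}\Sh$; this is not ``automatic'' from the constancy of $\tr\Sh$. The paper proves it using the eigendistribution property of Hopf hypersurfaces with constant principal curvatures (Lemma~\ref{lemmahopf}) for the non-Hopf directions, together with a separate explicit computation in the Hopf direction $W=-J\xi$ that uses the K\"ahler condition $\nabla J=0$. (One could alternatively invoke the traced Codazzi equation plus the Einstein property of $N^n(c)$, but some such argument must be supplied; your proposal supplies none.)

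Relatedly, your account of where the curvature constants come from is off. The ambient curvature enters only through the explicit $R^N$ terms in the formulas~\eqref{2stensionfield} and~\eqref{2s+1tensionfield} for $\tau_r$, not through the iteration of $\Delta$. The term $-R^N(\Delta^{2s-2}\tau,d\psi e_j)d\psi e_j$ produces $2(n+1)\tfrac{c}{4}\tr\Sh^2$, while each of the $s-1$ summands in the $l$-sum contributes an identical quantity $\tfrac{c}{4}\tr\Sh(\tr\Sh+3\alpha)$ times the relevant $\beta$-factors --- that repetition, not an ``affine recursion,'' is the source of the $(r-2)$ factor. The $3\alpha$ does come from the holomorphic plane $\mathrm{span}(\xi,W)$ having sectional curvature $c$ rather than $c/4$, as you say, but the $\tfrac{4}{c}$ in the final formula is merely the normalization obtained by dividing the whole identity by $c/4$; it has nothing to do with ``inverting a coefficient from $J\xi$ feeding back into itself under $\Delta$,'' since no such feedback occurs. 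Finally, you should note that the computation yields $\tau_r=(\text{scalar})\cdot\tau$, so that $\tau_r=0$ holds if and only if $M$ is minimal or the scalar vanishes --- this is what turns the identity into a characterization of \emph{proper} $r$-harmonicity.
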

All these hypersurfaces in $\CPn$ and $\CHn$ can be seen as tubes of radius $ t \in I $ over a complex submanifold. This interpretation yields the following result, where $ \{ M_t \}_{ t \in I } $ denotes the family of hypersurfaces of tubes of radius $t$ over a fixed complex submanifold.

\begin{thmb}\label{thmb}
    Let $ \{ M_t \}_{ t \in I } $ be a family of hypersurfaces in $\CPn$ as above. There exist two natural numbers $ r' $ and $ r'' $ such that
    \begin{enumerate}
        \item If $ r > r' $, the family $ \{ M_t \}_{ t \in I } $ contains at least two proper $ r $-harmonic hypersurfaces.
        \item If $ r > r'' $, the family $ \{ M_t \}_{ t \in I } $ contains exactly four proper $ r $-harmonic hypersurfaces for some suitable choice of the complex submanifold.
    \end{enumerate}
\end{thmb}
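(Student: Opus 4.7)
The approach is to recast Theorem~A as a single scalar equation in the tube radius $t$. Setting
\[
    F(t) := \tfrac{4}{c}\bigl(\tr \Sh^2(t)\bigr)^2 - 2(n+1)\tr \Sh^2(t), \qquad G(t) := \tr \Sh(t)\bigl(\tr \Sh(t) + 3\alpha(t)\bigr),
\]
$M_t$ is proper $r$-harmonic if and only if $F(t) = (r-2) G(t)$. Because the principal curvatures of tubes over a complex submanifold of $\CPn$ are explicit trigonometric functions of $t$ with multiplicities determined by the focal submanifold, both $F$ and $G$ are explicit smooth functions on $I$, and the plan is to count, for each large integer $r$, the preimages of $r$ under $\rho(t) := 2 + F(t)/G(t)$ on the components of $I \setminus G^{-1}(0)$.

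First I would determine the asymptotic behaviour of $\rho$ at the endpoints of $I$, which correspond to focal submanifolds where the dominant principal curvatures blow up like $\cot t$ or $\tan t$. A leading-order expansion gives $F$ diverging like a fourth power of these singularities whereas $G$ only like a second power, so $G > 0$ near both endpoints and $\rho(t) \to +\infty$ there. Part~(1) follows from the intermediate value theorem: on the components of $\{G > 0\}$ adjacent to an endpoint of $I$, $\rho$ assumes arbitrarily large values near that endpoint, and a direct continuity argument produces at least two distinct preimages of any integer $r > r'$, for $r'$ chosen large enough. Properness is automatic because the minimality locus $\{\tr \Sh = 0\}$ is a finite subset of $I$ and does not accumulate along the branches carrying the solutions.

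For Part~(2), I would choose the complex submanifold so that $G$ has exactly two interior zeros $t_1 < t_2 \in I$ and $F(t_i) > 0$ at each, so that $\rho(t) \to +\infty$ at all four boundary points of the two components $(0, t_1)$ and $(t_2, \pi/2)$ of $\{G > 0\}$. The zero conditions $\tr \Sh = 0$ and $\tr \Sh + 3\alpha = 0$ each prescribe $\tan^2 t$ as a rational function of the multiplicities, so a concrete choice (for instance, a tube over a totally geodesic $\mathbb{CP}^k$ with $k$ selected so that the two values are distinct, both lie in $I$, and $F$ is positive at each zero) yields the desired configuration. Then $\rho$ is coercive on each of the two side components, each contributing at least two preimages for $r$ exceeding the maximum of its minimum, while the central interval $(t_1, t_2)$ where $G < 0$ forces $\rho < 2$ and contributes nothing. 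Setting $r''$ to be one more than this maximum of minima produces at least four proper $r$-harmonic hypersurfaces in the family.

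The main technical obstacle is upgrading \emph{at least four} to \emph{exactly four}: this amounts to showing that $\rho$ has a unique critical point on each of the two side components. I would approach this by substituting $u = \tan^2 t$ so that $F$ and $G$ become polynomials in $u$ and $\rho'$ becomes a rational function whose numerator is an explicit polynomial; counting real roots of this polynomial on the relevant subinterval (e.g.\ by Descartes' rule of signs applied to the explicit coefficients for the chosen focal submanifold) would complete the argument.
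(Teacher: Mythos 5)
Your overall strategy --- reduce to a scalar equation in $t$, exploit the two special radii where $\tr \Sh=0$ and $\tr\Sh+3\alpha=0$ (which make the equation $r$-independent), and count solutions by the intermediate value theorem --- is the same as the paper's, and your part (1) is essentially sound. But part (2) contains a genuine error in the claimed configuration. You assume one can arrange $F(t_i)>0$ at the interior zeros $t_1<t_2$ of $G$, so that $\rho\to+\infty$ at all four boundary points of the two outer components and the middle component (where $G<0$, hence allegedly $\rho<2$) contributes nothing. For the families actually at hand this is false: for a tube over a totally geodesic $\mathbb{CP}^{k}$ (the very choice you propose), the paper computes that $P\propto F$ is \emph{negative} at both special radii (e.g.\ $2n^4P_{A_1}(x_0)=-(n-1)(2n-1)^2<0$ and $(n+3)^4P_{A_1}(x_2)<0$). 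Consequently $\rho\to-\infty$, not $+\infty$, as you approach $t_1,t_2$ from the $G>0$ side, each outer component carries exactly one solution, and the remaining two solutions live precisely in the middle component, where $G<0$ \emph{and} $F<0$ so that $F/G>0$ there. (Note $F=\tr\Sh^2(\tr\Sh^2-2(n+1))$ changes sign at the biharmonic radii, so ``$G<0$ forces $\rho<2$'' fails.) The paper's sign pattern is $+,-,+,-,+$ at the five points $0,\,x_0,\,x_1,\,x_2,\,1$, where $x_0,x_2$ are the two special radii and $x_1$ is an auxiliary $r$-dependent point squeezed between them; producing that middle $+$ for large $r$ is where the real work (and the explicit bound on $r''$, via the Cauchy bound applied to a polynomial in $r$) lies, and your proposal has no counterpart to it.

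The second gap is the upper bound ``exactly four,'' which you correctly flag as the main obstacle but do not resolve; your proposed route (uniqueness of the critical point of $\rho$ on each component, via Descartes' rule applied to $\rho'$) is both unexecuted and harder than necessary. The paper's resolution is immediate: after the substitution $x=\sin^2 t$ (or $\cos^2 2t$ for types $B$--$E$), the equation $F=(r-2)G$ clears to a single polynomial $P(x)$ of degree exactly $4$ with leading coefficient $a_4=4(n^2+3n)r-8(n-1)>0$, so it has at most four real roots, and four sign changes in $(0,1)$ therefore give exactly four. You are one step from this observation --- you already note that $F$ and $G$ become polynomials after a trigonometric substitution --- but without it the ``exactly'' in part (2) is unproven. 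Finally, your properness remark should be sharpened to the observation that the special radius with $\tr\Sh=0$ is not a root of $P$ (since $P<0$ there), so no root of $P$ yields a minimal hypersurface.
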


Explicit bounds for $ r' $ and $ r'' $ are given. Moreover, we show that $ \CHn $ does not admit any polyharmonic hypersurface of this type.

\smallskip
For the biharmonic case, we recover the classification of proper biharmonic homogeneous real hypersurfaces in $\CPn$ given in~\cite{sasahara2019classification}, these are tubes over a totally geodesic $\mathbb{CP}^{n-p}$ in $\CPn$ of certain radii $t_-$ or $t_+$, and obtain the following result regarding their stability.

\begin{thmc}\label{thmc}
    Every homogeneous and proper biharmonic hypersurface in $\CPn$ is unstable. Moreover, there exists $C\equiv C(p)>0$ such that if $n-p > C$ the normal index of the biharmonic tube over $\mathbb{CP}^{n-p}$ in $\CPn$ of radius $t_+$ is exactly $1$.
\end{thmc}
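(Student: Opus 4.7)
The strategy hinges on an explicit description of the normal Jacobi operator $J^\perp$ associated with the second variation of $E_2$ at a biharmonic hypersurface $M \subset \CPn$, combined with a spectral decomposition exploiting homogeneity. For a normal variation $V = f\xi$, the standard second variation formula for the bi-energy, adapted to the complex setting and simplified using that $M$ is Hopf with constant principal curvatures, yields
\begin{equation*}
    J^\perp(f) = \Delta^2 f + A\, \Delta f + B\, f,
\end{equation*}
where $A = A(n,p)$ and $B = B(n,p)$ are constants depending only on the trace invariants of the shape operator (explicit via the classical tube formulas) and the holomorphic sectional curvature of $\CPn$.

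\emph{Part 1 (instability).} Testing $J^\perp$ on the constant function $f \equiv 1$ yields $J^\perp(1) = B$, so it suffices to exhibit $B < 0$. I would compute $B$ in closed form either directly from the shape operator and ambient curvature, or, equivalently, as the second derivative of $t \mapsto E_2(M_t)$ at the critical radii $t_\pm$ by differentiating the expression furnished by Theorem~A. In either approach, an explicit evaluation with the tube formulas shows $B < 0$ at $t_-$ and $t_+$, producing a negative direction for the Hessian and thereby proving instability.

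\emph{Part 2 (normal index equal to $1$).} Since $A$ and $B$ are constants, $J^\perp$ commutes with $\Delta$ and acts on each $\lambda$-eigenspace by multiplication by $P(\lambda) = \lambda^2 + A\lambda + B$. From $B < 0$, the polynomial $P$ has real roots $\mu_- < 0 < \mu_+$, and the normal index equals the total multiplicity of Laplace eigenvalues of $M_{t_+}$ lying in $(\mu_-, \mu_+)$. The eigenvalue $\lambda = 0$ contributes multiplicity one, so it is enough to prove $\lambda_1(M_{t_+}) > \mu_+$.

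\emph{Main obstacle.} The crux is the lower bound on $\lambda_1(M_{t_+})$. The tube over $\mathbb{CP}^{n-p}$ in $\CPn$ of radius $t_+$ is a compact homogeneous space whose Laplace spectrum can be accessed via representation theory; one expects a bound for $\lambda_1$ growing with $n-p$. Meanwhile $\mu_+$ is a fully explicit function of $n$ and $p$ through $A$ and $B$. Balancing these two estimates determines the constant $C(p)$ for which $n - p > C(p)$ forces $\lambda_1 > \mu_+$, and hence the index to be exactly $1$. This spectral comparison is the delicate analytic part of the argument, while everything else reduces to an algebraic computation with the tube principal curvatures.
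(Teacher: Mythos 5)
Your overall architecture (test the Hessian on constants to get instability; then compare the first nonzero Laplace eigenvalue against an explicit threshold to cap the index at $1$) is the same as the paper's, but there is a genuine gap at the heart of Part 2. After simplifying Ou's second variation formula using that $\CPn$ is Einstein and that $\tr \Sh_t^2 = 2(n+1)$ at a biharmonic radius, the quadratic form is
\begin{equation*}
Q(f)=\int_M \big[(\Delta f)^2+4\,|\Sh_t(\nabla f)|^2-4f^2\,\tr\Sh_t(\tr\Sh_t+3\alpha_t)\big]\,\dv_g ,
\end{equation*}
so the associated operator is $\Delta^2 f-4\,\mathrm{div}(\Sh_t^2\nabla f)+Bf$, \emph{not} $\Delta^2 f+A\Delta f+Bf$ with $A$ constant: the tubes in question have distinct principal curvatures ($2\cot 2t$, $\cot t$, $-\tan t$), so $\Sh_t^2$ is not a multiple of the identity. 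Consequently $J^\perp$ does not commute with $\Delta$, does not act as the scalar $P(\lambda)=\lambda^2+A\lambda+B$ on each eigenspace, and your identification of the normal index with the multiplicity of Laplace eigenvalues in $(\mu_-,\mu_+)$ does not go through. The paper circumvents this with the one-sided pointwise bound $4|\Sh_t(\nabla f)|^2\ge 4\lambda_{\min}^2|\nabla f|^2$: the resulting lower bound for $Q$ \emph{is} diagonal over the eigenspace decomposition, and positivity of $Q$ on the codimension-one subspace $\{\int_M f=0\}$ already forces the index to be at most $1$, which is all that is needed.

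Two further points. First, your key analytic step --- a lower bound on $\lambda_1(M_{t_+})$ --- is left as an expectation rather than proved; the paper obtains it elementarily from Ho's comparison theorem, $2\mu_1>\mathrm{Ric}^N_{\min}-\max_M|\tr\Sh_t|$, giving $\mu_1\ge (n+1)-\tfrac12|\tr\Sh_{t_+}|$, which grows like $n$ while the threshold coming from $4\lambda_{\min}^2$ and $4\tr\Sh_{t_+}(\tr\Sh_{t_+}+3\alpha_{t_+})$ stays of order $\sqrt{n}$ as $n\to\infty$ with $p$ fixed; this asymptotic comparison is what produces $C(p)$. No representation theory is needed. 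Second, in Part 1 the instability still requires checking that the zeroth-order coefficient $-4\tr\Sh_{t_\pm}(\tr\Sh_{t_\pm}+3\alpha_{t_\pm})$ is actually negative at both critical radii; your proposal asserts $B<0$ but defers the verification, and this sign check (together with the fact that the constants form the full $\mu_0=0$ eigenspace) is what pins the index at exactly $1$ rather than $0$.
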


The organization of the document is as follows. In Section~\ref{sectionpreliminaries} we introduce some basic notions on polyharmonic maps and Hopf hypersurfaces. In Section~\ref{sectionequation} we prove Theorem\,A. Theorem\,B and explicit upper bounds for $r'$ and $r''$ are given in Section~\ref{sectionpolynomials}. The proof of Theorem\,C is given in Section~\ref{sectionbiharmonic}.

\smallskip
{\bf Acknowledgments.}
The author would like to thank Stefano Montaldo for interesting and helpful discussions. This work was supported by the Deutsche Forschungsgemeinschaft (DFG, German Research Foundation) under Germany’s Excellence Strategy EXC 2044–390685587, Mathematics M\"unster: Dynamics–Geometry–Structure and the CRC 1442 Geometry: Deformations and Rigidity. This article contains results from the author's PhD thesis.


\section{Preliminaries}\label{sectionpreliminaries}

\subsection{Polyharmonic maps}

We introduce here some basic concepts on polyharmonic maps and the Laplace operator used throughout the manuscript. 

The \emph{$ r $-tension field}, $ \tau_r $, is a higher-order analog of the tension field, in the sense that the system of partial differential equations $ \tau_r ( \psi ) = 0 $ characterizes polyharmonic maps of order $ r $. The following equations depict an explicit formula for the $r$-tension field, see~\cite{maeta2012k} for a reference. Here, $ { \psi : M \to N } $ denotes a smooth map between two Riemannian manifolds $ ( M, g ) $ and $ ( N, h ) $. In order to simplify the notation we write $ \tau_r $ instead of $ \tau_r ( \psi ) $.

If $ r = 2 s, s \geq 1 $, then
\begin{equation}\label{2stensionfield}
    \begin{split}
        \tau_{ 2 s }  :=& \Delta^{ 2 s - 1 } \tau  - R^N ( \Delta^{ 2 s - 2 } \tau , d \psi e_j  ) d \psi e_j  \\
        &+ \sum_{ l = 1 }^{ s - 1 } \big[ R^N ( \Delta^{ s - l - 1 } \tau , \nabla_j \Delta^{ s + l - 2 } \tau  ) d \psi e_j  \\
        &\hspace{0.8cm} - R^N ( \nabla_j \Delta^{ s - l - 1 } \tau , \Delta^{ s + l - 2 } \tau  ) d \psi e_j  \big].
    \end{split}
\end{equation}
On the other hand, if $r=2s$,
\begin{equation}\label{2s+1tensionfield}
    \begin{split}
        \tau_{ 2 s + 1 } :=& \Delta^{ 2 s } \tau - R^N ( \Delta^{ 2 s - 1 } \tau, d \psi e_j ) d \psi e_j \\
        &- \sum_{ l = 1 }^{ s - 1 } \big[ R^N ( \nabla_j \Delta^{ s + l - 1 } \tau, \Delta^{ s - l - 1 } \tau ) d \psi e_j \\
        &\hspace{0.8cm} - R^N ( \Delta^{ s + l - 1 } \tau, \nabla_j \Delta^{ s - l - 1 } \tau ) d \psi e_j \big] \\
        &- R^N ( \nabla_j \Delta^{ s - 1 } \tau, \Delta^{ s - 1 } \tau ) d \psi e_j.
    \end{split}
\end{equation}
Recall that the rough Laplacian reads $ \Delta := -\tr ( \nabla \nabla - \nabla_{ \nabla } ) $.

\smallskip
A proper biharmonic hypersurface $ M $ in $ N $ is said to be \emph{normally stable} if the second variation of the bienergy functional is non-negative for any normal variation with compact support. Ou~\cite{ou2022stability} showed that the second variation of the bienergy for a normal variation, $\delta^2 E(f \xi, f \xi)=Q(f)$, is given by
\begin{equation}\label{hessian}\small
    \begin{split}
        Q ( f ) =& \int_M [ f ( \tr \Sh_t^2 - { \rm Ric }^N ( \xi, \xi ) ) - \Delta f ]^2  + | f \nabla \tr \Sh_t - 2 ( { \rm Ric }^N ( \xi ) )^{ \top } + 2 \Sh_t ( \nabla f ) |^2  \dv_g \\
        +& \int_M f^2 \tr \Sh_t [ ( \nabla^{ N }_{ \xi } { \rm Ric }^N ) ( \xi, \xi ) ) - 2 \tr { \rm R }^N ( \xi, \cdot, \nabla^{ N }_{ \xi } ( \cdot ), \xi ) ] -  4 f^2 \tr \Sh_t \tr { \rm R }^N ( \xi, \Sh_t ( \cdot ), \cdot, \xi ) \dv_g
    \end{split}
\end{equation}
where $ \xi $ is a normal vector field along $ M $.

The Laplace operator $ \Delta $ has proved to be useful in the study of the second variation of biharmonic submanifolds, as shown in~\cite{ou2022stability, montaldo2022second}. For convenience we recall some properties of this elliptic operator and its spectrum, we use~\cite{chavel2006riemannian} as a reference.

Let $ M $ be a compact and connected manifold. The set of eigenvalues for
\begin{equation*}
    \Delta f + \mu f = 0,
\end{equation*}
where $ f \in C^2 ( M ) $, consists of a sequence
\begin{equation*}
    0 = \mu_0 < \mu_1 < \ldots < \mu_{\ell} \to +\infty,
\end{equation*}
and each associated eigenspace is finite-dimensional. Eigenspaces belonging to distinct eigenvalues are orthogonal in $ L^2 ( M ) $, and $ L^2 ( M ) $ is the direct sum of all the eigenspaces. Furthermore, each eigenfunction is in $ C^{ \infty } ( M ) $.

In addition, the following result turned out to be useful for our purposes
\begin{theorem}\cite[Theorem~2.1]{ho2008first}\label{firsteigenvalue}
    Suppose that $ M $ is a compact orientable hypersurface embedded in a compact Riemannian manifold $ N $. If the Ricci curvature of $ N $ is bounded below by a positive constant $ k $, then $ 2 \mu_1 > k - \max_M | \tr\Sh| $ where $ \mu_1 $ is the first non-zero eigenvalue of the Laplacian of $ M $.
\end{theorem}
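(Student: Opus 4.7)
The plan is to combine the Bochner identity on $M$ with the Gauss equation, and, if that route does not suffice, to upgrade to Reilly's formula applied to a harmonic extension of a first eigenfunction.

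First I would take $f$ to be a first nonzero eigenfunction, $\Delta f = -\mu_1 f$, and integrate the Bochner identity on $M$ to get
\[
\mu_1 \int_M |\nabla f|^2 \dv = \int_M |\nabla^2 f|^2 \dv + \int_M {\rm Ric}^M(\nabla f, \nabla f) \dv,
\]
reducing matters to a lower bound on ${\rm Ric}^M$. Invoking the Gauss equation
\[
{\rm Ric}^M(X, X) = {\rm Ric}^N(X, X) - R^N(X, \xi, X, \xi) + (\tr \Sh) \langle \Sh X, X\rangle - |\Sh X|^2,
\]
the hypothesis ${\rm Ric}^N \geq k$ controls the first term, while the remaining sectional and shape-operator contributions have to be handled algebraically in order to extract a lower bound involving only $\tr \Sh$.

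The obstacle with this direct route is twofold: the sectional curvature $R^N(X, \xi, X, \xi)$ is not controlled by a bare Ricci bound, and the extrinsic quadratic form naturally produces a dependence on $\tr \Sh$ that is quadratic rather than linear. To overcome both issues and to recover the precise factor $2$ in front of $\mu_1$, I would instead apply Reilly's formula to the harmonic extension $u$ of $f$ to a domain $\Omega \subset N$ with $\partial \Omega = M$ (which exists since $M$ is a compact, orientable, embedded hypersurface in the compact $N$). Reilly's boundary term contains precisely
\[
(\tr \Sh) u_\nu^2 + 2 u_\nu \Delta^M f + \langle \Sh \nabla^M f, \nabla^M f\rangle = (\tr \Sh) u_\nu^2 - 2\mu_1 f u_\nu + \langle \Sh \nabla^M f, \nabla^M f\rangle,
\]
so both the coefficient $2$ and the linear dependence on $\tr \Sh$ appear directly; combining this with the Ricci lower bound $\int_\Omega {\rm Ric}^N(\nabla u, \nabla u) \geq k \int_\Omega |\nabla u|^2$ on the interior side and a Cauchy--Schwarz estimate on the cross term $\int_M f u_\nu$ should close the argument and produce the claimed inequality $2\mu_1 > k - \max_M |\tr \Sh|$.

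The main technical hurdle will be controlling the boundary $L^2$-norms of $u_\nu$ and of $f$ against the bulk quantity $\int_\Omega |\nabla u|^2$; since $u$ is harmonic, this amounts to a trace-type inequality, and the strict inequality in the conclusion suggests handling the borderline case of Cauchy--Schwarz separately, most likely by exploiting that equality would force $u$ (and hence $f$) to be constant on $M$, contradicting $\mu_1 > 0$.
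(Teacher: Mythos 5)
First, a contextual remark: the paper does not prove this statement at all --- it is quoted verbatim from Ho's 2008 article --- so there is no internal proof to compare against; your attempt has to be judged against the cited source's (Choi--Wang-type) argument. On that score, you correctly identify the right framework (Reilly's formula applied to the harmonic extension of a first eigenfunction) and correctly diagnose why the naive Bochner-plus-Gauss route fails, but as written the argument does not close. (i) The boundary term $\langle \Sh\nabla^M f,\nabla^M f\rangle$ is written down and then never addressed. It involves the full shape operator, not its trace, so no pointwise estimate can absorb it into a bound depending only on $\max_M|\tr\Sh|$. The essential missing idea is that $M$ separates $N$ into two components whose outward normals are opposite, so this integral changes sign with the choice of component; one must choose the component $\Omega$ for which $\int_M\langle \Sh\nabla^M f,\nabla^M f\rangle\ge 0$ and take the harmonic extension there. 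Relatedly, the existence of a domain $\Omega$ with $\partial\Omega=M$ is not automatic from compactness and orientability alone; it relies on ${\rm Ric}^N>0$ (finiteness of $\pi_1(N)$, possibly after passing to a cover).

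(ii) Applying Cauchy--Schwarz to the cross term $\int_M f u_\nu$ is the wrong move: for the harmonic extension this term equals $\int_\Omega|\nabla u|^2>0$ exactly, by the divergence theorem, and this identity is precisely what converts Reilly's formula into an inequality of the form $2\mu_1\int_\Omega|\nabla u|^2\ge k\int_\Omega|\nabla u|^2+\int_M(\tr\Sh)\,u_\nu^2+\cdots$; estimating it by Cauchy--Schwarz discards exactly the structure you need. (iii) The ``trace-type inequality'' you flag as the remaining hurdle --- controlling $\int_M u_\nu^2$ by $\int_\Omega|\nabla u|^2$ --- is false in general: the Dirichlet-to-Neumann map is not bounded by the identity in $L^2$. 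For instance, for the harmonic extension $u=x_1$ to a Euclidean ball of radius $\rho$ one has $\int_{\partial B_\rho}u_\nu^2=\rho^{-1}\int_{B_\rho}|\nabla u|^2$, which exceeds the bulk energy for $\rho<1$. So the term $\int_M(\tr\Sh)\,u_\nu^2$ --- the genuinely new difficulty relative to the minimal (Choi--Wang) case, and the whole point of Ho's refinement --- is not disposed of by your plan, and the proposal as it stands stops exactly where the real work begins.
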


\subsection{Hopf hypersurfaces}

We give in this subsection a brief introduction to the theory of Hopf hypersurfaces. We use~\cite{cecil2015geometry} as a reference.

Let $ ( N, \langle \cdot, \cdot \rangle, J ) $ be a Kähler manifold and $ \sigma $ be a plane in the tangent space $ T_p N $, $ p \in N $. We write
\begin{equation*}
    K_p ( \sigma ) = R ( X, Y, Y, X )
\end{equation*}
for the sectional curvature, where $ \{ X, Y \} $ is an orthonormal basis of $ \sigma $. If $ \sigma $ is invariant by the almost complex structure $ J $, then $ K_p ( \sigma ) $ is called the \emph{holomorphic sectional curvature of $ \sigma $}. In particular, if $ K ( \sigma ) $ is constant for all planes $ \sigma $ in $ T_p N $ invariant by $ J $ and for all points $ p \in N $, then $ N $ is called a \emph{space of constant holomorphic sectional curvature}.

The Riemannian curvature tensor for a space of constant holomorphic sectional curvature $ c \neq 0 $ can be expressed as
\begin{equation}\label{curvaturetensor}
    \tfrac{ 4 }{ c } R ( X, Y, Z ) = \langle Y, Z \rangle X - \langle X, Z \rangle Y + \langle X ,J Z \rangle J Y - \langle Y, J Z \rangle J X + 2 \langle X, J Y \rangle J Z.
\end{equation}
The sectional curvature for any plane $ \sigma $ in $ T_p N $ spanned by two orthonormal vectors $ X, Y $ reads
\begin{equation*}
    K ( \sigma ) = \tfrac{ c }{ 4 } ( 1 + 3 \langle X, J Y \rangle^2 ).
\end{equation*}
The complex projective space endowed with the Fubini--Study metric or the complex hyperbolic space with the Bergman metric are examples of spaces with constant holomorphic sectional curvature. Moreover, any simply connected complete $ 2 n $-dimensional Kähler manifold of constant holomorphic sectional curvature $ c $ is holomorphically isometric to $ \CPn ( c ) $, $ \mathbb{ C }^n $ or $ \CHn ( c ) $ depending if $ c > 0 $, $ c = 0 $ or $ c < 0 $, respectively. We refer the reader to~\cite{kobayashi1996foundations} for an overview of the general aspects of complex geometry.

\smallskip
In what follows we use $ N^n $ to denote either $ \CHn $ or $\CPn$. Let $ M $ be an $ ( 2 n - k ) $-dimensional Riemannian manifold, $ k < 2 n $, and let $ f : M \to N^n $ be a Riemannian immersion. Write $ BM $ for the bundle of unit normal vectors to $ f ( M ) $ in $ N^n $. We define the \emph{tube of radius $ t > 0 $ over $ M $}, denoted by $ M_t $, as the image of the map $ f_t : BM \to N^n $ defined by
\begin{equation*}
    f_t ( p, \xi ) = \exp_p ( t \xi ).
\end{equation*}
Note that given any $ p \in M $, there is always a neighborhood $ U $ of $ p $ in $ M $ such that for all $ t > 0 $ sufficiently small, the restriction of $ f_t $ to $ BU $ is an immersion onto an $ ( 2 n - 1 ) $-dimensional manifold.

Let now $ M \subset N^{ n } $ be a hypersurface and $ \xi $ a local choice of unit normal vector. We say that $ M $ is a \emph{Hopf hypersurface} if 
\begin{equation*}
    W = - J \xi
\end{equation*}
is a principal vector for the shape operator $ \Sh $, that is, $ \Sh W = \alpha W $ where $ { \alpha \in \R } $ is called the \emph{Hopf principal curvature}. We will refer to $ W $ as the \emph{structure vector}. In a Hopf hypersurface $ M $ of $ \CHn $ or $ \CPn $, the Hopf principal curvature remains constant~\cite[Theorem~6.16]{cecil2015geometry}.

In particular, Hopf hypersurfaces for which all their principal curvatures are constant have been classified by Kimura~\cite{berndt1989real} in $ \CHn $ and by Berndt~\cite{kimura1986real} in $ \CPn $. They are all tubes of a certain radius, as shown in Table~\ref{tablechn} for $ \CHn $ and Table~\ref{tablecpn} for $ \CPn $.

The following property turned out to be useful for our purposes.
\begin{lemma}\cite[Lemma~8.1]{cecil2015geometry}\label{lemmahopf}
    Let $ M $ be a Hopf hypersurface with constant principal curvatures in a complex space form $N^n$. For all eigenvalues $ \lambda, \mu $ which are not the Hopf principal curvature, we have
    \begin{equation*}
        \nabla_X Y \perp T_{ \lambda } \text{ if } X \in T_{ \lambda }, Y \in T_{ \mu }, \lambda \neq \mu,
    \end{equation*}
    where $ T_{ \lambda }, T_{ \mu } $ are the corresponding eigendistributions and $ \nabla $ is the Levi--Civita connection on $ M $.
\end{lemma}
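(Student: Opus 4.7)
The plan is to invoke the Codazzi equation for the immersion $M \hookrightarrow N^n$, combined with the explicit form~\eqref{curvaturetensor} of the curvature tensor of a complex space form and the Hopf condition $\Sh W = \alpha W$. Fix $X \in T_\lambda$, $Y \in T_\mu$ with $\lambda \neq \mu$ both distinct from the Hopf principal curvature $\alpha$, and pick an arbitrary $Z \in T_\lambda$; the aim is to show $\langle \nabla_X Y, Z\rangle = 0$.

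First I would compute $R^N(X,Y)\xi$ from~\eqref{curvaturetensor}. Since $X,Y$ are tangent to $M$ and $J\xi = -W$, only the last three terms of the formula contribute, giving
\begin{equation*}
    R^N(X,Y)\xi = \tfrac{c}{4}\bigl(-\langle X,W\rangle\, JY + \langle Y,W\rangle\, JX - 2\langle X,JY\rangle\, W\bigr).
\end{equation*}
By the Hopf condition, $W$ lies in the eigenspace $T_\alpha$, so self-adjointness of $\Sh$ together with the hypothesis $\lambda, \mu \neq \alpha$ forces $X,Y,Z \perp W$. Pairing the displayed expression with $Z$ therefore annihilates all three summands, and one concludes $\langle R^N(X,Y)\xi, Z\rangle = 0$.

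Next I would feed this into the Codazzi equation $(\nabla_X \Sh)Y - (\nabla_Y \Sh)X = R^N(X,Y)\xi$ tested against the tangent vector $Z$. Using the constancy of the principal curvatures to treat $\lambda, \mu$ as numerical constants in $\nabla_X(\Sh Y) = \mu \nabla_X Y$ and $\nabla_Y(\Sh X) = \lambda \nabla_Y X$, and the self-adjointness identity $\langle \Sh V, Z\rangle = \lambda \langle V, Z\rangle$ valid for arbitrary $V$ since $Z \in T_\lambda$, a short expansion yields
\begin{equation*}
    \langle (\nabla_X \Sh)Y, Z\rangle = (\mu - \lambda)\langle \nabla_X Y, Z\rangle, \qquad \langle (\nabla_Y \Sh)X, Z\rangle = 0.
\end{equation*}
Combined with the vanishing right-hand side of Codazzi and the fact that $\mu \neq \lambda$, this gives $\langle \nabla_X Y, Z\rangle = 0$ as claimed.

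The argument is essentially a one-line consequence of Codazzi, so no genuine obstacle appears; the only delicate points are the sign conventions in~\eqref{curvaturetensor} relating $J\xi$ and $W$, and checking that the assumption $\lambda, \mu \neq \alpha$ simultaneously forces all three vectors $X,Y,Z$ to be orthogonal to $W$, which is precisely what makes the right-hand side of Codazzi collapse.
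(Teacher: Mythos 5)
Your Codazzi argument is correct and complete: the paper itself gives no proof of this lemma, quoting it directly from \cite[Lemma~8.1]{cecil2015geometry}, and your derivation is precisely the standard one found there. The two key checks both go through: since $\lambda,\mu\neq\alpha$ and $\Sh$ is self-adjoint, $X$, $Y$ and $Z$ are all orthogonal to $W\in T_\alpha$, so $\langle R^N(X,Y)\xi,Z\rangle=0$ regardless of the sign convention relating $J\xi$ and $W$; and constancy of the principal curvatures legitimizes $\nabla_X(\Sh Y)=\mu\nabla_X Y$ for a local section $Y$ of $T_\mu$, yielding $(\mu-\lambda)\langle\nabla_X Y,Z\rangle=0$ and hence the claim since $\mu\neq\lambda$.
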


\begin{table}[H]
\scalebox{0.9}{
\begin{tabular}{@{}llll@{}}
\toprule
Type  & Focal submanifold & Principal curvatures & Multiplicities \\ \midrule
$ A_0 $ & Horosphere in $ \CHn $ & \begin{tabular}[c]{@{}l@{}}
                                    $ \alpha = 2 $\\ 
                                    $ \lambda_1 = 1 $ 
                                \end{tabular}                   &  \begin{tabular}[c]{@{}l@{}}
                                                                        $ m_{ \alpha } = 1 $\\ 
                                                                        $ m_1 = 2 n - 2 $ 
                                                                    \end{tabular}                 \\ \midrule
$ A_1 $ & Totally geodesic $ \mathbb{ C H }^{ n - 1 }$ &    \begin{tabular}[c]{@{}l@{}}
                                                                $ \alpha = 2 \coth 2 t $\\ 
                                                                $ \lambda_1 = \tanh t $
                                                            \end{tabular}                   &   \begin{tabular}[c]{@{}l@{}}
                                                                                                    $ m_{ \alpha } = 1 $\\ 
                                                                                                    $ m_1 = 2 n - 2 $
                                                                                                \end{tabular}               \\ \midrule
$ A_1 $ & $ \mathbb{ C H }^{ 0 } $ &    \begin{tabular}[c]{@{}l@{}}
                                            $ \alpha = 2 \coth 2 t $\\ 
                                            $ \lambda_1 = \coth t $
                                        \end{tabular}                   &   \begin{tabular}[c]{@{}l@{}}
                                                                                $ m_{ \alpha } = 1 $\\ 
                                                                                $ m_1 = 2 n - 2 $
                                                                            \end{tabular}               \\ \midrule
$ A_2 $ & \begin{tabular}[c]{@{}l@{}} 
            Totally geodesic $ \mathbb{ C H }^k $, \\ 
            $ 1 \leq k \leq n - 2 $ 
        \end{tabular}                                   &   \begin{tabular}[c]{@{}l@{}}
                                                                $ \alpha = 2 \coth 2 t $\\ 
                                                                $ \lambda_1 = \coth t $\\ 
                                                                $ \lambda_2 = \tanh t $
                                                            \end{tabular}                   &   \begin{tabular}[c]{@{}l@{}}
                                                                                                    $ m_{ \alpha } = 1 $\\ 
                                                                                                    $ m_1 = 2 ( n - k - 1 ) $\\ 
                                                                                                    $ m_2 = 2 k $
                                                                                                \end{tabular}                   \\ \midrule
\text{$ B $\tablefootnote{The radius $ t = \tfrac{ 1 }{ 2 } \ln ( 2 + \sqrt{ 3 } ) $ is not allowed.}} & Totally geodesic $ \mathbb{ R H }^n $  \hspace{0.3cm} &    \begin{tabular}[c]{@{}l@{}}
                                                                                                                                                                        $ \alpha = 2 \tanh 2 t $\\ 
                                                                                                                                                                        $ \lambda_1 = \coth t $\\ 
                                                                                                                                                                        $ \lambda_2 = \tanh t $
                                                                                                                                                                        \end{tabular}               &   \begin{tabular}[c]{@{}l@{}}
                                                                                                                                                                                                            $ m_{ \alpha } = 1 $\\ 
                                                                                                                                                                                                            $ m_1 = n - 1 $\\ 
                                                                                                                                                                                                            $ m_2 = n - 1 $
                                                                                                                                                                                                        \end{tabular}               \\ \bottomrule
\end{tabular}
}
\caption{Hopf hypersurfaces with constant principal curvatures in $ \CHn ( - 4 ) $}
\label{tablechn}
\end{table}

\begin{table}[H]
\scalebox{0.9}{
\begin{tabular}{@{}llll@{}}
\toprule
Type & Focal submanifold & Principal curvatures & Multiplicities \\ \midrule
$ A_1 $ & Totally geodesic $ \mathbb{ C P }^{ n - 1 }$ &    \begin{tabular}[c]{@{}l@{}}
                                                                $ \alpha = 2 \cot 2 t $\\ 
                                                                $ \lambda_1 = - \tan t $ 
                                                            \end{tabular}                   &   \begin{tabular}[c]{@{}l@{}}
                                                                                                    $ m_{ \alpha } = 1 $\\ 
                                                                                                    $ m_1 = 2 n - 2 $ 
                                                                                                \end{tabular}               \\ \midrule
$ A_2 $ & \begin{tabular}[c]{@{}l@{}}
            Totally geodesic $ \mathbb{ C P }^k $\\ 
            $ 1 \leq k \leq n - 2 $ 
        \end{tabular}                               &   \begin{tabular}[c]{@{}l@{}}
                                                            $ \alpha = 2 \cot 2 t $\\ 
                                                            $ \lambda_1 = \cot t $\\ 
                                                            $ \lambda_2 = - \tan t $
                                                        \end{tabular}                   &   \begin{tabular}[c]{@{}l@{}}
                                                                                                $ m_{ \alpha } = 1 $\\ 
                                                                                                $ m_1 = 2 ( n - k - 1 ) $\\ 
                                                                                                $ m_2 = 2 k $
                                                                                            \end{tabular}                   \\ \midrule
$ B $ & Totally geodesic $ \mathbb{ R P }^n $ &    \begin{tabular}[c]{@{}l@{}}
                                                        $ \alpha = 2 \tan 2 t $\\ 
                                                        $ \lambda_1 = - \cot t $\\ 
                                                        $ \lambda_2 = \tan t $
                                                    \end{tabular}                   &   \begin{tabular}[c]{@{}l@{}}
                                                                                            $ m_{ \alpha } = 1 $\\ 
                                                                                            $ m_1 = n - 1 $\\ 
                                                                                            $ m_2 = n - 1 $
                                                                                        \end{tabular}               \\ \midrule
$ C $   & \begin{tabular}[c]{@{}l@{}}
            Segre embedding: \\ 
            $ \mathbb{ C P }^1 \times \mathbb{ C P }^k \hookrightarrow \mathbb{ C P }^{ n }, $\\ 
            $ k \geq 2 $, $ n = 2 k + 1 $ 
        \end{tabular}                                                                            &  \begin{tabular}[c]{@{}l@{}}
                                                                                                        $ \alpha = 2 \cot 2 t $\\ 
                                                                                                        $ \lambda_1 = \cot ( t - \tfrac{ \pi }{ 4 } ) $\\ 
                                                                                                        $ \lambda_2 = \cot ( t - \tfrac{ 3 \pi }{ 4 } ) $\\ 
                                                                                                        $ \lambda_3 = \cot ( t - \tfrac{ \pi }{ 2 } ) $\\ 
                                                                                                        $ \lambda_4 = \cot ( t ) $
                                                                                                    \end{tabular}                                           &   \begin{tabular}[c]{@{}l@{}}
                                                                                                                                                                    $ m_{ \alpha } = 1 $\\ 
                                                                                                                                                                    $ m_1 = 2 $\\ 
                                                                                                                                                                    $ m_2 = 2 $\\ 
                                                                                                                                                                    $ m_3 = n - 3 $\\ 
                                                                                                                                                                    $ m_4 = n - 3 $
                                                                                                                                                                \end{tabular}               \\ \midrule
$ D $ &   \begin{tabular}[c]{@{}l@{}}
            Pl\"ucker embedding: \\ 
            $ G_2 ( \mathbb{ C }^5 ) \hookrightarrow \mathbb{ C P }^9 $ 
        \end{tabular}                                                   &   \begin{tabular}[c]{@{}l@{}}
                                                                                $ \alpha = 2 \cot 2 t $\\ 
                                                                                $ \lambda_1 = \cot ( t - \tfrac{ \pi }{ 4 } ) $\\ 
                                                                                $ \lambda_2 = \cot ( t - \tfrac{ 3 \pi }{ 4 } ) $\\ 
                                                                                $ \lambda_3 = \cot ( t - \tfrac{ \pi }{ 2 } ) $\\ 
                                                                                $ \lambda_4 = \cot ( t ) $
                                                                             \end{tabular}                                          &   \begin{tabular}[c]{@{}l@{}}
                                                                                                                                            $ m_{ \alpha } = 1 $\\ 
                                                                                                                                            $ m_1 = 4 $\\ 
                                                                                                                                            $ m_2 = 4 $\\ 
                                                                                                                                            $ m_3 = 4 $\\ 
                                                                                                                                            $ m_4 = 4 $
                                                                                                                                        \end{tabular}               \\ \midrule
$ E $ &   \begin{tabular}[c]{@{}l@{}}
            Half spin embedding: \\ 
            $ SO ( 10 ) / U ( 5 ) \hookrightarrow \mathbb{ C P }^{ 15 } $ 
        \end{tabular}                                                       &   \begin{tabular}[c]{@{}l@{}}
                                                                                    $ \alpha = 2 \cot 2 t $\\ 
                                                                                    $ \lambda_1 = \cot ( t - \tfrac{ \pi }{ 4 } ) $\\ 
                                                                                    $ \lambda_2 = \cot ( t - \tfrac{ 3 \pi }{ 4 } ) $\\ 
                                                                                    $ \lambda_3 = \cot ( t - \tfrac{ \pi }{ 2 } ) $\\ 
                                                                                    $ \lambda_4 = \cot ( t ) $
                                                                                \end{tabular}                                           &   \begin{tabular}[c]{@{}l@{}}
                                                                                                                                                $ m_{ \alpha } = 1 $\\ 
                                                                                                                                                $ m_1 = 6 $\\ 
                                                                                                                                                $ m_2 = 6 $\\ 
                                                                                                                                                $ m_3 = 8 $\\ 
                                                                                                                                                $ m_4 = 8 $
                                                                                                                                            \end{tabular}               \\ \bottomrule
\end{tabular}
}
\caption{Hopf hypersurfaces with constant principal curvatures in $ \CPn ( 4 ) $}
\label{tablecpn}
\end{table}

In what follows we write ``Hopf hypersurface'' instead of ``Hopf hypersurface with constant principal curvatures'' since we will not deal with the general case.

\subsection{Quartic polynomials}

We found it convenient to briefly recall here some general properties of polynomials and their roots.

A general quartic equation over $ \mathbb{ R } $ is any equation of the form
\begin{equation}\label{generalquarticequation}
    a_4 x^4 + a_3 x^3 + a_2 x^2 + a_1 x + a_0 = 0
\end{equation}
where $ a_4, a_3, a_2, a_1, a_0 \in \mathbb{ R } $ and $ a_4 \neq 0 $. If we divide every term by $ a_4 $ and apply the change of variable
\begin{equation*}
    y = x - \frac{ a_3 }{ 4 a_4 },
\end{equation*}
then~\eqref{generalquarticequation} reads
\begin{equation*}
    y^4 + p_2 y^2 + p_1 y + p_0 = 0
\end{equation*}
where
\begin{equation*}\small
    p_2 = \frac{ 8 a_4 a_2 - 3 b_3^2 }{ 8 a_4^2 }, \quad p_1 = \frac{ a_3^3 - 4 a_4 a_3 a_2 + 8 a_4^2 a_1 }{ 8 a_4^3 }, \quad p_0 = \frac{ 16 a_4 a_3^2 a_2 - 64 a_4^2 a_3 a_1 - 3 a_3^4 + 256 a_4^3 a_0 }{ 256 a_4^4 }.
\end{equation*}
In particular, if $ p_1 = 0 $ then~\eqref{generalquarticequation} can be reduced to a biquadratic equation and solved by
\begin{equation*}
    y_{ 1, 2 } = \pm \sqrt{ \frac{ - p_2 + \sqrt{ p_2^2 - 4 p_0 } }{ 2 } }, \quad y_{ 3, 4 } = \pm \sqrt{ \frac{ - p_2 - \sqrt{ p_2^2 - 4 p_0 } }{ 2 } }.
\end{equation*}

We will make use of the so-called \emph{Cauchy bound} for the roots of a polynomial (see, for example~\cite[Chapter~VII]{marden1949geometry}), which states that an upper bound of the absolute value on the roots of~\eqref{generalquarticequation} is given by
\begin{equation*}
    1 + \max \left\{ \left| \tfrac { a_3 }{ a_4 } \right|, \left| \tfrac { a_2 }{ a_4 } \right|, \left| \tfrac{ a_1 }{ a_4 } \right|, \left| \tfrac{ a_0 }{ a_4 } \right| \right\}.
\end{equation*}

\section{Polyharmonic equation}\label{sectionequation}

The goal of this section is to obtain the $ r $-harmonic equation for the isometric immersion of a Hopf hypersurface into $ N^n ( c ) $. We write
\begin{equation*}
    i_t : M_t \hookrightarrow N^n ( c )
\end{equation*}
for the inclusion of the hypersurface $ M_t $ arising as the tube of radius $ t $ over the corresponding complex submanifold. We use $ e_0 $ for the local unit normal vector field to $ M_t $ in $ N^n ( c ) $ such that if we flow in the direction of $ e_0 $, the radius of the tube increases. Every object in $ M_t $ is written with the subscript $ t $, such as the shape operator $ \Sh_t $ or the Hopf principal curvature $ \alpha_t $. 

\begin{theorem}\label{r-tensionfield}
    A Hopf hypersurface $ M_t $ in $ N^n ( c ) $ is $ r $-harmonic, $ r \geq 2 $, if and only if it is minimal or
    \begin{equation}\label{equationc}
        \tfrac{ 4 }{ c } ( \tr \Sh_t^2 )^2 - 2 ( n + 1 ) ( \tr \Sh_t^2 ) - ( r - 2 ) ( \tr \Sh_t )^2 - 3 \alpha_t ( r - 2 ) \tr \Sh_t = 0.
    \end{equation}
\end{theorem}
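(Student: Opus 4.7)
The plan is to exploit the rigidity of Hopf hypersurfaces with constant principal curvatures to reduce the $r$-tension field to a scalar multiple of the unit normal $\xi = e_0$. Write $H := \tr \Sh_t$ and $K := \tr \Sh_t^2$; both are constants on $M_t$, so the tension field is $\tau = H\xi$. The key intermediate claim is that the rough Laplacian on sections of $i_t^*TN^n(c)$ satisfies $\Delta \xi = K\xi$, which then propagates to $\Delta^k \tau = HK^k \xi$ and $\nabla_{e_j}\Delta^k \tau = -HK^k \Sh_t(e_j)$ for every $k \geq 0$.

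To establish $\Delta \xi = K\xi$, the Gauss formula applied to $\nabla^N_X\xi = -\Sh_t X$ yields the identity $\Delta \xi = \mathrm{div}(\Sh_t) + K\xi$, and the task reduces to showing $\mathrm{div}(\Sh_t) = 0$. This follows from the once-contracted Codazzi equation, $\langle \mathrm{div}(\Sh_t), Y\rangle = Y(H) - \mathrm{Ric}^N(Y, \xi)$, together with the Einstein property $\mathrm{Ric}^N = \tfrac{c(n+1)}{2}g$ of $N^n(c)$ and the constancy of $H$. In parallel I would compute two curvature contractions, both direct consequences of~\eqref{curvaturetensor}:
\[
    \sum_j R^N(\xi, e_j) e_j = \tfrac{c(n+1)}{2}\xi, \qquad \sum_j R^N(\xi, \Sh_t e_j) e_j = \tfrac{c}{4}(H + 3\alpha_t)\xi.
\]
The second identity is where the Hopf condition genuinely enters: using $W = -J\xi$, $JW = \xi$, and $\Sh_t W = \alpha_t W$, the term $2\langle \xi, J\Sh_t e_j\rangle Je_j$ collapses to $2\alpha_t\xi$ and $\langle \xi, Je_j\rangle J\Sh_t e_j$ contributes an additional $\alpha_t\xi$; the cross term involving $\sum_j \langle \Sh_t e_j, Je_j\rangle$ vanishes because $\tr(\Sh_t \varphi) = 0$ for $\varphi$ skew-symmetric and $\Sh_t$ symmetric, where $\varphi$ denotes the tangential projection of $J$.

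Substituting these ingredients into~\eqref{2stensionfield} and~\eqref{2s+1tensionfield}, all terms lie in $\mathbb{R}\xi$. For $r = 2s$ each of the $s-1$ summands of the $l$-indexed correction sum evaluates to $-2H^2 K^{2s-3}\sum_j R^N(\xi, \Sh_t e_j)e_j$, giving
\[
    \tau_{2s} = HK^{2s-3}\bigl[K^2 - \tfrac{c(n+1)}{2}K - \tfrac{c(s-1)}{2}H(H+3\alpha_t)\bigr]\xi,
\]
while a parallel bookkeeping for $r = 2s+1$ produces
\[
    \tau_{2s+1} = HK^{2s-2}\bigl[K^2 - \tfrac{c(n+1)}{2}K - \tfrac{c(2s-1)}{4}H(H+3\alpha_t)\bigr]\xi.
\]
Setting $\tau_r = 0$ and using that $M_t$ is proper (so $H \neq 0$, which also forces $K \neq 0$ since $K=0$ would make $\Sh_t \equiv 0$ and hence $H=0$), the square bracket must vanish. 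Multiplying through by $4/c$ and using $2(s-1) = r-2$ in the even case or $2s-1 = r-2$ in the odd case yields~\eqref{equationc}.

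The main technical obstacle is the identification $\Delta\xi = K\xi$, which elevates $\xi$ to a constant ``eigenvector'' of the rough Laplacian; once this is in place, the recursive structure of~\eqref{2stensionfield}--\eqref{2s+1tensionfield} collapses, and all curvature contributions channel through the single scalar contraction $\sum_j R^N(\xi, \Sh_t e_j)e_j$. The secondary bookkeeping of signs in the $l$-indexed correction sums is routine but is precisely what produces the $(r-2)$-coefficient and the $3\alpha_t$ term in the final equation~\eqref{equationc}.
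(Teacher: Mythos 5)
Your proposal is correct, and it reaches the same two computational ingredients as the paper (the eigenvector property $\Delta^k\tau=(\tr\Sh_t^2)^k\tau$ and the two curvature contractions of \eqref{curvaturetensor}), but it establishes the crucial eigenvector property by a genuinely different argument. The paper's Lemma~\ref{lemmaroughlaplacian} proves that the tangential part of $\Delta^{m+1}\tau$ vanishes by diagonalizing $\Sh_t$ and invoking Lemma~\ref{lemmahopf}, the Cecil--Ryan orthogonality property of the eigendistributions of a Hopf hypersurface with constant principal curvatures, supplemented by explicit computations in the Hopf direction $W=-J\xi$. You instead reduce the claim to $\mathrm{div}(\Sh_t)=0$ via the Gauss--Weingarten formulas and then kill the divergence with the once-contracted Codazzi equation together with the Einstein property of $N^n(c)$ and the constancy of $\tr\Sh_t$. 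Your route is cleaner and strictly more general: it shows $\Delta\xi=(\tr\Sh_t^2)\xi$ for \emph{any} CMC hypersurface in an Einstein manifold, so the Hopf condition and the constancy of all principal curvatures enter only through the single contraction $\sum_j R^N(\xi,\Sh_t e_j)e_j=\tfrac{c}{4}(\tr\Sh_t+3\alpha_t)\xi$ (where your treatment of the three $J$-terms, including the vanishing of $\tr(\Sh_t\varphi)$ by skew-symmetry, is exactly right). What the paper's approach buys in exchange is that it never needs the Codazzi equation or the Einstein condition as separate inputs, staying entirely within the intrinsic structure theory of Hopf hypersurfaces already set up in the preliminaries. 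Your bookkeeping of the $l$-indexed sums in \eqref{2stensionfield} and \eqref{2s+1tensionfield} — each summand contributing $-2H^2K^{2s-3}\tfrac{c}{4}(H+3\alpha_t)\xi$, with the extra unpaired term in the odd case giving $2s-1=r-2$ — reproduces the paper's coefficients exactly, and your observation that $K=0$ forces $H=0$ correctly handles the dichotomy with the minimal case.
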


In order to prove the theorem above, we first state a couple of auxiliary identities. The first lemma below shows that for any $m \in \mathbb{N}$ the tension field $\tau(i_t)$ is an eigenvector for the $m^{\text{th}}$-iterated rough laplacian.
\begin{lemma}\label{lemmaroughlaplacian}
    For every $ m \in \N $, the following identity holds:
    \begin{equation*}
        \Delta^{ m } \tau( i_t ) = ( \tr \Sh_t^2 )^m  \tau( i_t ).
    \end{equation*}
\end{lemma}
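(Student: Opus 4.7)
The plan is to reduce the identity to the case $m=1$ by a short induction and then compute $\Delta\tau(i_t)$ directly. For an isometric immersion, the tension field of a hypersurface is $\tau(i_t)=(\tr\Sh_t)\,e_0$, and since $M_t$ has constant principal curvatures, $\tr\Sh_t$ is a real constant; in particular, the scalar $(\tr\Sh_t^2)^{m-1}$ is constant and can be pulled through $\Delta$. Thus, if the identity holds for $m-1$, applying $\Delta$ once more gives $\Delta^{m}\tau(i_t)=(\tr\Sh_t^2)^{m-1}\,\Delta\tau(i_t)$, and it suffices to prove the base case $\Delta\tau(i_t)=(\tr\Sh_t^2)\,\tau(i_t)$, or equivalently $\Delta e_0=(\tr\Sh_t^2)\,e_0$, where $e_0$ is viewed as a section of $i_t^*TN^n(c)$.

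For the base case, I would compute $\Delta e_0$ in a local orthonormal frame $\{e_i\}$ for $M_t$ using the pullback connection, and split the result into its normal and tangential components relative to $M_t$. The Weingarten formula $\nabla^{N}_{X}e_0=-\Sh_t(X)$ combined with the Gauss formula $\nabla^{N}_{X}Y=\nabla^{M_t}_{X}Y+\langle\Sh_t(X),Y\rangle\,e_0$ shows, after the standard bookkeeping, that the normal component of $\Delta e_0$ equals $(\tr\Sh_t^2)\,e_0$, produced by the terms of the form $\langle\Sh_t(e_i),\Sh_t(e_i)\rangle$, while the tangential component reduces, up to a sign, to the divergence $\sum_i(\nabla^{M_t}_{e_i}\Sh_t)(e_i)$.

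The only delicate step is to check that this divergence vanishes. Using the symmetry of $\nabla\Sh_t$ and the Codazzi equation for the immersion $i_t : M_t \hookrightarrow N^n(c)$, one obtains, for every tangent vector $Y$,
\begin{equation*}
    \bigl\langle \textstyle\sum_i (\nabla^{M_t}_{e_i}\Sh_t)(e_i),\, Y\bigr\rangle = Y(\tr\Sh_t) - \sum_i \langle R^{N}(e_i,Y)e_0, e_i\rangle.
\end{equation*}
The first summand is zero because $\tr\Sh_t$ is constant. For the second, I would substitute the explicit expression~\eqref{curvaturetensor} of the curvature of $N^n(c)$, use that $\langle Je_i, e_i\rangle=0$ and that $Je_0$ is tangent to $M_t$ (as $\langle Je_0, e_0\rangle=0$), and simplify; the $J$-terms collapse to $\tfrac{3c}{4}\langle Je_0, JY\rangle=\tfrac{3c}{4}\langle e_0, Y\rangle=0$. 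This completes the base case, and combined with the induction argument it yields the identity for every $m\in\mathbb{N}$. The main obstacle is the careful normal/tangential decomposition of $\Delta e_0$ and the verification that the K\"ahler curvature contribution vanishes, for which the explicit form~\eqref{curvaturetensor} of $R^{N}$ is essential.
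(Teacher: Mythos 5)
Your proof is correct, but it handles the crucial step --- the vanishing of the tangential component --- by a genuinely different route than the paper. The paper works in an orthonormal basis of principal directions and reduces the tangential part to $\sum_{i,j}(\lambda_i-\lambda_j)\langle\nabla_{E_i}di_tE_i,di_tE_j\rangle\,di_tE_j$; it then kills the off-diagonal terms not involving the Hopf curvature by invoking Lemma~\ref{lemmahopf} (a structural fact about Hopf hypersurfaces with constant principal curvatures), and disposes of the remaining terms by an explicit computation with the structure vector $W=-Je_0$ and the K\"ahler condition. You instead identify the tangential part of $\Delta e_0$ with the divergence $\sum_i(\nabla^{M_t}_{e_i}\Sh_t)(e_i)$ and kill it via the Codazzi equation: the gradient term $Y(\tr\Sh_t)$ vanishes because the mean curvature is constant, and the trace of the ambient curvature vanishes by a direct substitution of~\eqref{curvaturetensor}, using that $Je_0$ is tangent. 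Your argument is the classical divergence computation for CMC hypersurfaces and is both more standard and slightly more general: it never uses the Hopf condition or Lemma~\ref{lemmahopf} for the tangential part, only constancy of $\tr\Sh_t$ (constancy of $\tr\Sh_t^2$ is still needed to run the induction, and both are supplied by the constant principal curvatures). What the paper's approach buys is a self-contained eigenbasis computation that stays entirely within the Hopf framework already set up for Lemma~4.3; what yours buys is economy and a cleaner identification of exactly which hypotheses are used where. One tiny caveat: your reduction ``equivalently $\Delta e_0=(\tr\Sh_t^2)e_0$'' implicitly divides by $\tr\Sh_t$, so you should note that the minimal case is trivial --- but this is cosmetic, not a gap.
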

\begin{proof}
    We proceed by induction on $ m $. Take an orthonormal basis $ \{ e_i \}_{ i = 1 }^{ 2 n - 1 } $ of $ T_p M_t $ and extend it to a local orthonormal frame in a neighborhood of $ p $. Since $ \Delta^0 \tau \equiv \tau $, by definition of the tension field we get:
    \begin{equation*}\small
        \begin{split}
            \tau ( i_t ) =& \sum_{ i = 1 }^{ 2 n - 1 } \nabla_{ e_i } d i_{ t } e_i - d i_{ t } \nabla_{ e_i } e_i = \sum_{ i = 1 }^{ 2 n - 1 } \nabla_{ e_i } d i_{ t } e_i - \sum_{ j = 1 }^{ 2 n - 1 } \langle \nabla_{ e_i } e_i, e_j \rangle  d i_{ t } e_j \\
            =& \sum_{ i = 1 }^{ 2 n - 1 } \langle \nabla_{ e_i } d i_{ t } e_i, e_0 \rangle e_0 + \sum_{ j = 1 }^{ 2 n - 1 } \left[ \langle \nabla_{ e_i } d i_{ t } e_i, di_t e_j \rangle - \langle \nabla_{ e_i } e_i, e_j \rangle \right] d i_t e_j = \tr \Sh_t e_0,
        \end{split}
    \end{equation*}
    where in the last identity we used the fact that $ i_t $ is an isometric embedding and applied the Gauss formula. Assume that the property holds for an arbitrary $ m $, then
    \begin{equation*}
        \Delta^{ m + 1 } \tau = \Delta ( \beta_m ( t ) e_0 ),
    \end{equation*}
    where, in order to simplify the notation, we write $ \beta_m ( t ) = \tr \Sh_t \, ( \tr \Sh_t^2 )^m $. Note that since the principal curvatures are constant on $ M_t $, then $ \beta_m $ is constant on $ M_t $. By definition of the rough Laplacian, we obtain:
    \begin{equation*}
        \Delta^{ m + 1 } \tau = \sum_{ i = 1 }^{ 2 n - 1 } \underbrace{ \nabla_{ \nabla_{ e_i } e_i } \beta_m e_0 }_{ ( A ) } - \underbrace{ \nabla_{ e_i } \nabla_{ e_i } \beta_m e_0 }_{ ( B ) }.
    \end{equation*}

    Let us start with the term $ ( A ) $. We have
    \begin{equation}\label{partAgeneral}
        \begin{split}
            \sum_{ i = 1 }^{ 2 n - 1 } \nabla_{ \nabla_{ e_i } e_i } \beta_m e_0 &= \beta_m \sum_{ i, j = 1 }^{ 2 n - 1 } \langle \nabla_{ e_i } e_i, e_j \rangle \nabla_{ e_j } e_0 = - \beta_m \sum_{ i, j = 1 }^{ 2 n - 1 } \langle \nabla_{ e_i } e_i, e_j \rangle \Sh_t ( d i_t e_j ),
        \end{split} 
    \end{equation}
    and therefore
    \begin{equation}\label{termA}
        \sum_{ i = 1 }^{ 2 n - 1 } \langle \nabla_{ \nabla_{ e_i } e_i } \beta_m e_0, e_0 \rangle = 0.
    \end{equation}

    For part $ ( B ) $ we get
    \begin{equation}\label{partBgeneral}
        \sum_{ i = 1 }^{ 2 n - 1 } \nabla_{ e_i } \nabla_{ e_i } \beta_m e_0 = \beta_m \sum_{ i = 1 }^{ 2 n - 1 } \nabla_{ e_i } \nabla_{ e_i } e_0 = - \beta_m \sum_{ i = 1 }^{ 2 n - 1 } \nabla_{ e_i } S_{ t } ( d i_t e_i),
    \end{equation}
    thus
    \begin{equation}\label{termB}
        \sum_{ i = 1 }^{ 2 n - 1 } \langle \nabla_{ e_i } \nabla_{ e_i } \beta_m e_0, e_0 \rangle = - \beta_m \sum_{ i = 1 }^{ 2 n - 1 } \langle \nabla_{ e_i } S_{ t } ( d i_t e_i ), e_0 \rangle = - \beta_m \tr \Sh_t^2.
    \end{equation}
    Once we prove that the tangential part vanishes, equations~\eqref{termA} and~\eqref{termB} give the result. Joining~\eqref{partAgeneral} and~\eqref{partBgeneral}, we have that
    \begin{equation}\label{expressiontan}\small
        \left( \Delta^{ m } \tau( i_t ) \right)^{ \text{tan} } = \beta_m \sum_{ i = 1 }^{ 2 n - 1 } \left( \nabla_{ e_i } S_{ t } ( d i_t e_i ) - \sum_{ j = 1 }^{ 2 n - 1 } \langle \nabla_{ e_i } e_i, e_j \rangle \Sh_t ( di_t e_j ) \right)^{ \text{tan} }.
    \end{equation}
    This expression does not depend on the choice of the basis, since the normal part does neither. Take then an orthonormal basis $ \{ E_i \}_{ i = 1 }^{ 2 n - 1 } $ of $ T_p M_t $ such that $ \{ d i_t E_1 = W, d i_t E_2, \ldots, d i_t E_{ 2 n - 1 } \} $ forms an orthonormal basis of principal vectors of $ T_{ i_{ t } ( p ) } i_t ( M_t ) $ with respective principal curvatures $ { \{ \lambda_1 = \alpha, \lambda_2, \ldots, \lambda_{ 2 n - 1 } \} } $. Extend $ \{ E_i \}_{ i = 1 }^{ 2 n - 1 } $ to a local orthonormal frame, then expression~\eqref{expressiontan} reads
    \begin{equation*}\small
        \left( \Delta^{ m } \tau( i_t ) \right)^{ \text{tan} } = \beta_m \sum_{ i = 1 }^{ 2 n - 1 } \left( \lambda_i \nabla_{ E_i } d i_t E_i - \sum_{ j = 1 }^{ 2 n - 1 } \lambda_j \langle \nabla_{ E_i } E_i, E_j \rangle di_t E_j \right)^{ \text{tan} },
    \end{equation*}
    where we used the property that the eigenvalues are constant on $ i_t ( M_t ) $. Equivalently,
    \begin{equation*}\small
        \begin{split}
            \left( \Delta^{ m } \tau( i_t ) \right)^{ \text{tan} } &= \beta_m \sum_{ i, j = 1 }^{ 2 n - 1 } [ \lambda_i \langle \nabla_{ E_i } d i_t E_i, d i_t E_j \rangle - \lambda_j \langle \nabla_{ E_i } E_i, E_j \rangle ] di_t E_j \\
            &= \beta_m \sum_{ i, j = 1 }^{ 2 n - 1 } [ \lambda_i  - \lambda_j] \langle \nabla_{ E_i } d i_t E_i, d i_t E_j \rangle di_t E_j
        \end{split}
    \end{equation*}
    by the Gauss formula. 
    
    It is clear that the terms with $ \lambda_i = \lambda_j $ vanish. Moreover, Lemma~\ref{lemmahopf} ensures that if $ \lambda_i \neq \lambda_j $, and none of them are the Hopf curvature, then $ \langle \nabla_{ E_i } d i_t E_i, d i_t E_j \rangle = 0 $. Hence, only those terms involving the Hopf curvature, $ \alpha $, are left. In this case,
    \begin{equation*}
        \langle \nabla_{ E_1 } W, di_t E_j \rangle = - \langle \nabla_{ E_1 } J e_0, di_t E_j \rangle = - \langle J \nabla_{ E_1 } e_0, di_t E_j \rangle = \alpha \langle J W, di_t E_j \rangle = 0,
    \end{equation*}
    and
    \begin{equation*}
        \langle \nabla_{ E_i } di_t E_i, W \rangle = - \langle d i_t E_i, \nabla_{ E_i } W \rangle = \langle d i_t E_i, \nabla_{ E_i } J e_0 \rangle = - \lambda_i \langle d i_t E_i, J d i_t E_i \rangle = 0.
    \end{equation*}
    This is, the tangential part vanishes, as we wanted to show.
\end{proof}

\begin{lemma}\label{lemma2}
    For any $ \ell, m \in \mathbb{ N } $, the following identities hold:
    \begin{equation*}\small
        \begin{split}
            &\sum_{ j = 1 }^{ 2 n - 1 } R ( \Delta^m \tau, d i_t e_j, d i_t e_j, e_0 ) = \tfrac{ c }{ 4 } 2 ( n + 1 ) \beta_m ( t ), \\[0.5ex]
            &\sum_{ j = 1 }^{ 2 n - 1 } R ( \Delta^m \tau, \nabla_{ e_j } \Delta^{ \ell } \tau, d i_t e_j, e_0 ) = - \tfrac{ c }{ 4 } \beta_m ( t ) \beta_{ \ell } ( t ) [ \tr \Sh_t + 3 \alpha_t ],
        \end{split}
    \end{equation*}
    where $ \beta_{ \ell } ( t ) = \tr \Sh_t \, ( \tr \Sh_t^2 )^{ \ell } $, $ \ell \in \mathbb{ N } $, and $ \{ e_i \}_{ i = 1 }^{ 2 n - 1 } $ is any local orthonormal frame on $ M_t $. 
\end{lemma}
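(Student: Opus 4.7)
The plan is to reduce both identities to a direct application of the curvature tensor formula \eqref{curvaturetensor} once Lemma~\ref{lemmaroughlaplacian} is invoked to handle $\Delta^m\tau$ and $\Delta^\ell\tau$. By Lemma~\ref{lemmaroughlaplacian},
\begin{equation*}
    \Delta^m \tau(i_t) = (\tr \Sh_t^2)^m \tau(i_t) = \beta_m(t)\, e_0,
\end{equation*}
and analogously for $\Delta^\ell \tau$. In particular both $\Delta^m\tau$ and $\Delta^\ell\tau$ are normal vector fields along $i_t(M_t)$, which will make most terms in the expansion of $R$ vanish immediately.

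For the first identity, I would substitute $X = \beta_m e_0$, $Y = Z = di_t e_j$ into \eqref{curvaturetensor}, pair the result with $e_0$, and sum over $j$. The terms involving $\langle e_0, di_t e_j\rangle$ or $\langle Je_0, e_0\rangle$ drop by orthogonality, and the remaining contributions combine to
\begin{equation*}
    \tfrac{4}{c}\langle R(\beta_m e_0, di_t e_j)\,di_t e_j, e_0\rangle = \beta_m\bigl(1 + 3\langle W, di_t e_j\rangle^2\bigr),
\end{equation*}
after rewriting $\langle e_0, J\,di_t e_j\rangle = \langle W, di_t e_j\rangle$ using $W = -Je_0$ and the compatibility of $J$ with the metric. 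Since $W$ is a tangent vector to $M_t$ of unit length, the Parseval-type identity $\sum_{j=1}^{2n-1}\langle W, di_t e_j\rangle^2 = |W|^2 = 1$ yields the sum $\beta_m[(2n-1)+3] = 2(n+1)\beta_m$, which is exactly the claimed constant up to the factor $c/4$.

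For the second identity, the preliminary step is to compute $\nabla_{e_j}\Delta^\ell\tau$. Because $M_t$ has constant principal curvatures, $\beta_\ell(t)$ is a constant function on $M_t$, so by the Weingarten formula
\begin{equation*}
    \nabla_{e_j}\Delta^\ell\tau = \beta_\ell(t)\,\nabla^N_{e_j}e_0 = -\beta_\ell(t)\,\Sh_t(di_t e_j),
\end{equation*}
which is tangent to $M_t$. Plugging this into $R$ with $X = e_0$, $Y = \Sh_t(di_t e_j)$, $Z = di_t e_j$ and pairing with $e_0$, the same vanishing arguments as above reduce \eqref{curvaturetensor} to
\begin{equation*}
    \tfrac{4}{c}\langle R(e_0, \Sh_t(di_t e_j))\,di_t e_j, e_0\rangle = \langle \Sh_t(di_t e_j), di_t e_j\rangle + 3\langle W, di_t e_j\rangle\langle \Sh_t(di_t e_j), W\rangle.
\end{equation*}
Summing over $j$, the first summand contributes $\tr\Sh_t$, while the second becomes $3\langle \Sh_t(W), W\rangle$ after recognizing it as $3\sum_j\langle W, di_t e_j\rangle\langle di_t e_j, \Sh_t(W)\rangle$; here the Hopf condition $\Sh_t(W)=\alpha_t W$ gives $\langle \Sh_t(W), W\rangle = \alpha_t$. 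Collecting the factors $\beta_m(-\beta_\ell)$ from the substitution yields $-\frac{c}{4}\beta_m\beta_\ell(\tr\Sh_t + 3\alpha_t)$, as desired.

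There is no real obstacle beyond careful bookkeeping: the conceptual inputs are that $J$ is metric-compatible (so that terms involving $\langle Je_0, e_0\rangle$ or $\langle di_t e_j, Jdi_t e_j\rangle$ vanish), that $W$ is tangent and unit (closing the sum $\sum_j\langle W, di_t e_j\rangle^2 = 1$), and that the Hopf condition identifies $\langle \Sh_t(W), W\rangle$ with $\alpha_t$. The only mild subtlety is keeping track of signs when rewriting $\langle e_0, J\cdot\rangle$ as $\langle W, \cdot\rangle$; once that is systematic, both identities follow line by line from \eqref{curvaturetensor}.
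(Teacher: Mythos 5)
Your proof is correct and follows essentially the same route as the paper: both reduce the identities to a direct substitution into the curvature formula \eqref{curvaturetensor}, using $\Delta^m\tau=\beta_m e_0$, the Weingarten formula for $\nabla_{e_j}\Delta^\ell\tau$, and the Hopf condition $\Sh_t(W)=\alpha_t W$. The only cosmetic difference is that the paper evaluates in a principal orthonormal frame (after noting basis-independence), whereas you work in an arbitrary frame and close the sums with $\sum_j\langle W, di_t e_j\rangle^2=1$ and $\sum_j\langle W, di_t e_j\rangle\langle di_t e_j,\Sh_t(W)\rangle=\alpha_t$; both computations agree.
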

\begin{proof}
    Note that both terms are independent of the choice of the orthonormal basis. Take then $ \{ E_i \}_{ i = 1 }^{ 2 n - 1 } $ an orthonormal basis of $ T_{ p } M_t $ such that $ \{ d i_t E_1 = W, d i_t E_2, \ldots, d i_t E_{ 2 n - 1 } \} $ is an orthonormal basis of principal vectors in $ T_{ i_{ t } ( p ) } i_t ( M_t ) $, and extend it to a local orthonormal frame in $M_t$. Using formula~\eqref{curvaturetensor} for the curvature tensor of a complex space form we obtain
    \begin{equation*}\small
        \tfrac{ 4 }{ c } R ( e_0, W ) W = e_0 + 3 \langle e_0, J W \rangle J W = 4 e_0.
    \end{equation*}
    On the other hand, if $j\neq 2$,
    \begin{equation*}\small
        \tfrac{ 4 }{ c } R ( e_0, d i_t E_j ) d i_t E_j = e_0 + 3 \langle e_0, J d i_t E_j \rangle J d i_t E_j = e_0,
    \end{equation*}
    where in the last identity we used that $ \langle e_0, J d i_t E_j \rangle = \langle W,  d i_t E_j \rangle = 0 $. Then the result follows from a direct computation after noting that $ { \nabla_{ E_j } \Delta^m \tau = - \beta_m \lambda_j d i_t E_j }$.
\end{proof}

\begin{proof}[Proof of Theorem~\ref{r-tensionfield}]
    By equation~\eqref{2stensionfield}, Lemma~\eqref{lemmaroughlaplacian} and Lemma~\eqref{lemma2} we get
    \begin{equation*}
        \tau_{ 2 s } ( i_t ) = ( \tr \Sh_t^2 )^{ 2 s - 3 } \, [ ( \tr \Sh_t^2 )^2 - 2 ( n + 1 ) \tfrac{ c }{ 4 } ( \tr \Sh_t^2 ) - 2 ( s - 1 ) \tfrac{ c }{ 4 } ( \tr \Sh_t )^2 - 6 \tfrac{ c }{ 4 } \alpha_t ( s - 1 ) \tr \Sh_t ] \tau(i_t).
    \end{equation*}
    If the hypersurface if minimal, then $ \tr \Sh_t = 0 $, so $\tau_{ 2 s } ( i_t )$ vanishes. Note that $ \tr \Sh_t^2 = 0 $ would also imply $ \tr \Sh_t = 0 $. In any other case, we obtain the equation stated in the theorem. A similar computation applies for the $ ( 2 s + 1 ) $-tension field~\eqref{2s+1tensionfield}.
\end{proof}

\begin{corollary}
    There are no proper $ r $-harmonic Hopf hypersurfaces in $ \CHn (c) $.
\end{corollary}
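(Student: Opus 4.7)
The plan is to combine Theorem~\ref{r-tensionfield} with the explicit classification of Hopf hypersurfaces in $\CHn$ recorded in Table~\ref{tablechn}. A proper $r$-harmonic Hopf hypersurface $M_t$ is by definition non-minimal; since $\tr\Sh_t^2=0$ forces $\Sh_t=0$, we automatically have $\tr\Sh_t^2 > 0$, and~\eqref{equationc} must hold. I would rewrite that equation in the form
\begin{equation*}
    \tfrac{4}{c}(\tr\Sh_t^2)^2 - 2(n+1)\tr\Sh_t^2 \;=\; (r-2)\tr\Sh_t\bigl(\tr\Sh_t + 3\alpha_t\bigr),
\end{equation*}
so that the sign obstruction becomes transparent: in $\CHn(c)$ one has $c<0$ and $\tr\Sh_t^2>0$, hence the left-hand side is strictly negative.

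The strategy is then to exhibit a contradictory lower bound on the right-hand side. For $r=2$ this is immediate, since the equation collapses to $\tr\Sh_t^2\bigl[\tfrac{4}{c}\tr\Sh_t^2 - 2(n+1)\bigr]=0$, whose only real solutions are $\tr\Sh_t^2=0$ (minimal, hence non-proper) or $\tr\Sh_t^2 = c(n+1)/2<0$, which is impossible. For $r\geq 3$ the factor $(r-2)$ is positive, so it is enough to prove $\tr\Sh_t \bigl(\tr\Sh_t + 3\alpha_t\bigr) \geq 0$; I would aim for the stronger statement that both $\tr\Sh_t$ and $\alpha_t$ are strictly positive.

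The verification of positivity is a finite inspection. After a homothety reducing to $c=-4$, I would run through the four families in Table~\ref{tablechn} and check that every principal curvature appearing there—namely $2$, $1$, $2\coth 2t$, $\tanh t$, $\coth t$, $2\tanh 2t$—is strictly positive on its admissible range $t>0$. Since the multiplicities are positive integers, this yields $\alpha_t>0$ and $\tr\Sh_t = \alpha_t + \sum_i m_i\lambda_i > 0$, hence $\tr\Sh_t(\tr\Sh_t+3\alpha_t)>0$, contradicting the negativity of the left-hand side.

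I do not expect a serious obstacle here; the work is essentially a sign audit of the hyperbolic expressions in Table~\ref{tablechn}. The only mild subtlety is the excluded radius $t=\tfrac12\ln(2+\sqrt{3})$ in type $B$, but positivity of the listed curvatures is unaffected on the remaining admissible range, so the argument goes through uniformly.
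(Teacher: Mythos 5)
Your proof is correct and takes essentially the same route as the paper: both arguments reduce the corollary to a sign analysis of~\eqref{equationc}, using $c<0$ together with the positivity of every principal curvature listed in Table~\ref{tablechn} to force the non-minimal branch of Theorem~\ref{r-tensionfield} to have no solutions. Your split into the cases $r=2$ and $r\geq 3$ is merely a more explicit write-up of the paper's one-line observation that all terms on the left-hand side are non-positive with at least one strictly negative.
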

\begin{proof}
    By Theorem~\ref{r-tensionfield} and since $ c < 0 $, we have that
    \begin{equation*}
        \tfrac{ 4 }{ c } ( \tr \Sh_t^2 )^2 - 2 ( n + 1 ) ( \tr \Sh_t^2 ) - ( r - 2 ) ( \tr \Sh_t )^2 - 3 \alpha_t ( r - 2 ) \tr \Sh_t < 0
    \end{equation*}
    due to the fact that the principal curvatures of any hypersurface listed in Table~\ref{tablechn} stay positive for $t >0$.
\end{proof}

\section{Existence of proper polyharmonic hypersurfaces in \texorpdfstring{$ \CPn $}{CPn}}\label{sectionpolynomials}

Let $ M_t $ be a Hopf hypersurface in $ \CPn ( c ) $. Note that the eigenvalues of the shape operator scale with a factor of $ \sqrt{ c } $, so equation~\eqref{equationc} is invariant with respect to the choice of the parameter $ c $. That is to say, the existence of $ r $-harmonic hypersurfaces is not affected by the holomorphic sectional curvature. We will then take $ c = 4 $ to be consistent with Table~\ref{tablecpn}, so the equation obtained in the last section reads:
\begin{equation}\label{equation}\small
    ( \tr \Sh_t^2 )^2 - 2 ( n + 1 ) ( \tr \Sh_t^2 ) - ( r - 2 ) ( \tr \Sh_t )^2 - 3 \alpha_t ( r - 2 ) \tr \Sh_t = 0.
\end{equation}

By Theorem~\eqref{r-tensionfield}, a lengthy computation shows that there is a one-to-one correspondence between $ r $-harmonic Hopf hypersurfaces and roots of the polynomial $P ( x ) = a_4 x^4 + a_3 x^3 + a_2 x^2 + a_1 x + a_0$, where the coefficients $ a_4, a_3, a_2, a_1, a_0 $ and the variable $ x $ are given in Table~\ref{polynomials}.

\begin{table}[h]
\scalebox{0.8}{
\begin{tabular}{@{}ccll@{}}
\toprule
Type & Variable & \multicolumn{2}{c}{Coefficients} \\ \midrule
$ A_1 $ & $ x = \sin^2 t $ & \begin{tabular}[c]{@{}l@{}} 
                                            \mbox{\small $ a_4 = 4 ( n^2 + 3 n ) r - 8 ( n - 1 ) $} \vspace{0.3cm} \\  
                                            \mbox{\small$ a_3 = - 2  ( 2 n^2 + 11 n + 3 ) r $} \vspace{0.1cm} \\ 
                                            \hspace{0.75cm} \mbox{\small $ + 4 ( n^2 + 3 n - 4 ) $} \vspace{0.3cm} \\ 
                                            \mbox{\small$ a_2 =  10 ( n + 1 ) r - 2 ( 3 n - 5 ) $} 
                                        \end{tabular}                                                                   &   \begin{tabular}[c]{@{}l@{}}
                                                                                                                                \mbox{\small $ a_1 = - 4 r - 2 (n + 1 ) $ } \vspace{0.3cm} \\ 
                                                                                                                                \mbox{\small $ a_0 = 1 $} 
                                                                                                                            \end{tabular}  \\                                                   \midrule
$ A_2 $ & $ x = \cos^2 t $ & \begin{tabular}[c]{@{}l@{}}
                                            \mbox{\small$ a_4 = 4 ( n^2 + 3 n ) r - 8 ( n - 1 ) $}\vspace{0.3cm} \\ 
                                            \mbox{\small$ a_3 = - 2  ( 2 n^2 + ( 4 k + 11 ) n + 6 k + 3 ) r $} \vspace{0.1cm} \\      
                                            \hspace{0.75cm} \mbox{\small$ + 4 ( n^2 - ( 2 k - 3 ) n - 4 ) $} \vspace{0.3cm} \\ 
                                            \mbox{\small$ a_2 = 2 ( ( 4 k + 5 ) n + 2 k^2 + 11 k + 5 ) r $} \vspace{0.1cm} \\    
                                            \hspace{0.75cm} \mbox{\small$ + 2 ( ( 2 k - 3 ) n + 4 k^2 + 4 k + 5 ) $} 
                                        \end{tabular}                                                                               &   \begin{tabular}[c]{@{}l@{}} 
                                                                                                                                            \mbox{\small$ a_1 = - 2 ( 2 k^2 + 5 k + 2 ) r $} \vspace{0.1cm} \\    \hspace{0.75cm} \mbox{\small$ - 2 ( ( 2 k + 1 ) n + ( 2 k + 1 )^2 ) $} \vspace{0.3cm} \\ \mbox{\small$ a_0 = ( 2 k + 1 )^2 $} 
                                                                                                                                        \end{tabular} \\                                                            \midrule
$ B $ & $ x = \cos^2 2 t $ & \begin{tabular}[c]{@{}l@{}}
                                            \mbox{\small $ a_4 = n ( n + 3 ) r - 2 ( n - 1 ) $}\vspace{0.3cm} \\ 
                                            \mbox{\small$ a_3 = - ( n^2 + 8 n + 3 ) r + 4 n^2 + 2 n - 10 $} \vspace{0.3cm} \\ 
                                            \mbox{\small$ a_2=  ( 5 n + 7 ) r - 2 ( 5 n - 11 ) $} 
                                        \end{tabular}                                                                           &   \begin{tabular}[c]{@{}l@{}}
                                                                                                                                        \mbox{\small$ a_1 = - 4 r + 2 ( n - 7 ) $} \vspace{0.3cm} \\ 
                                                                                                                                        \mbox{\small$ a_0 = 4 $} \end{tabular} \\                       \midrule
$ C $ & \multicolumn{1}{l}{$ x = \cos^2 2 t $} & \begin{tabular}[c]{@{}l@{}}
                                                                \mbox{\small$ a_4 = n ( n + 3 ) r - 2 ( n - 1 ) $}\vspace{0.3cm} \\ 
                                                                \mbox{\small$ a_3 = - ( n^2 + 7 n + 6 ) r + 4 ( n^2 - 3 n - 4 ) $}\vspace{0.3cm} \\ 
                                                                \mbox{\small$ a_2 = 2 ( 2n + 5 ) r - 2 ( 3 n - 41 ) $}
                                                            \end{tabular}                                                                           &   \begin{tabular}[c]{@{}l@{}}
                                                                                                                                                            \mbox{\small$ a_1 = - 4 r + 4 ( n - 17 ) $}\vspace{0.3cm} \\ 
                                                                                                                                                            \mbox{\small$ a_0 = 16 $}\end{tabular} \\               \midrule
$ D $ & \multicolumn{1}{l}{$ x = \cos^2 2 t $} & \begin{tabular}[c]{@{}l@{}}
                                                                \mbox{\small$ a_4 = 27 r - 4 $}\vspace{0.3cm} \\ 
                                                                \mbox{\small$ a_3 = - 48 r + 11 $}\vspace{0.3cm} \\ 
                                                                \mbox{\small$ a_2 = 25 r + 46 $}
                                                                \end{tabular}                                       &   \begin{tabular}[c]{@{}l@{}}
                                                                                                                            \mbox{\small$ a_1 = - 4 r + 44 $}\vspace{0.3cm} \\ \mbox{\small$ a_0 = 16 $}\end{tabular} \\            \midrule
$ E $ & \multicolumn{1}{l}{$ x = \cos^2 2 t $} & \begin{tabular}[c]{@{}l@{}}
                                                                \mbox{\small$ a_4 = 135 r - 14 $}\vspace{0.3cm} \\ 
                                                                \mbox{\small$ a_3 = - 234 r + 100 $}\vspace{0.3cm} \\ 
                                                                \mbox{\small$ a_2 = 117 r + 184 $}
                                                            \end{tabular}                                               &   \begin{tabular}[c]{@{}l@{}}
                                                                                                                            \mbox{\small$ a_1 = - 18 r - 180 $}\vspace{0.3cm} \\
                                                                                                                            \mbox{\small$ a_0 = 72 $}\end{tabular} \\              \bottomrule
\end{tabular}}
\caption{Polynomials characterizing $ r $-harmonicity}
\label{polynomials}
\end{table}

\begin{remark}
    For the cases of type $ C $, $ D $, and $ E $, it is convenient to write everything in terms of $ 2 t $ instead of $ t $. To this end, we found the following trigonometric identities useful:
    \begin{equation*}
        \begin{split}
            \sum_{ k = 0 }^{ 3 } \cot ( t - k \tfrac{ \pi }{ 4 } ) = 4 \cot 4 t, \quad \sum_{ k = 0 }^{ 3 } \cot^2 ( t - k \tfrac{ \pi }{ 4 } ) = 12 + 16 \cot^2 4 t.
        \end{split}
    \end{equation*}
\end{remark}

Note that if $ n = 1 $, and we plug the Hopf principal curvature of a type $ A_1 $ Hopf hypersurface ($ \alpha_t = 2 \cot 2 t $) in equation~\eqref{equation}, one easily sees that the tube around a point of radius $ t $ is a proper $ r $-harmonic curve if and only if
\begin{equation*}
    \sin^2 2 t = r^{ - 1 }.
\end{equation*}
Since $ \mathbb{ C P }^1 $ is just the Riemann sphere, this agrees with~\cite[Theorem~1.1]{montaldo2018new}. From now on, unless otherwise indicated, we will assume $ n \geq 2 $.  

We point out that, in general, there are two values of $ t $ which will be of special interest since they make the left-hand side of~\eqref{equation} independent of $ r $: these are the solutions for
\begin{equation*}
    \tr \Sh_t = 0,
\end{equation*}
which yields minimal hypersurfaces, and
\begin{equation*}
    \tr \Sh_t + 3 \alpha_t = 0.
\end{equation*}

\begin{theorem}\label{tha1}
    Let $ M_t, 0 < t < \frac{ \pi }{ 2 } $, be a family of type $ A_1 $ Hopf hypersurfaces in $ \CPn $, $ n \geq 2 $. Then:
    \begin{enumerate}
        \item \label{itema} The family $ M_t $ contains at least two proper $ r $-harmonic hypersurfaces for $ r \geq 2 $.
        \item \label{itemb} If $ r \geq 2 n + 13 $, the family $ M_t $ contains exactly four proper $ r $-harmonic hypersurfaces.
    \end{enumerate}
\end{theorem}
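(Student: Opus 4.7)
The plan is to study the quartic $P(x)$ from Table~\ref{polynomials} (type $A_1$ row) and count its roots in the open interval $(0,1)$, the natural range for $x = \sin^2 t$ when $t \in (0, \pi/2)$; each such root corresponds to a proper $r$-harmonic hypersurface in the family, so (1) and (2) translate to lower bounds of two and four roots respectively. The structural input is that $\tr\Sh_t(\tr\Sh_t + 3\alpha_t)$, the $(r-2)$-coefficient in Theorem~\ref{r-tensionfield}, vanishes precisely at the minimal locus $\tr\Sh_t = 0$ and at $\tr\Sh_t + 3\alpha_t = 0$, which correspond respectively to $x = 1/(2n)$ and $x = 2/(n+3)$. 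Consequently, one can rewrite
\begin{equation*}
P(x) = Q(x) - 4n(n+3)(r-2)\,x(1-x)\bigl(x - \tfrac{1}{2n}\bigr)\bigl(x - \tfrac{2}{n+3}\bigr),
\end{equation*}
for some $r$-independent quartic $Q$, and a direct coefficient sum yields $P(0) = 1$ and $P(1) = (2n-1)^2$, both positive.

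\textbf{Proof of (1).} At $x = 1/(2n)$ and $x = 2/(n+3)$ the $r$-dependent term vanishes, so $P$ coincides there with $Q$. Evaluating using $\alpha_t = 2\cot 2t$ and $\lambda_1 = -\tan t$ (multiplicity $2n-2$), one finds
\begin{equation*}
P\bigl(\tfrac{1}{2n}\bigr) = -\tfrac{(n-1)(2n-1)^2}{2n^4}, \qquad P\bigl(\tfrac{2}{n+3}\bigr) = -\tfrac{(n-1)(n+7)(3n^2+2n+11)}{(n+3)^4},
\end{equation*}
both strictly negative for $n \geq 2$. Since $P(0), P(1) > 0$, the Intermediate Value Theorem yields a root in $(0, 1/(2n))$ and another in $(2/(n+3), 1)$. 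As $P(1/(2n)) \neq 0$, neither root lies at the minimal locus, so both corresponding hypersurfaces are proper $r$-harmonic.

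\textbf{Proof of (2).} It suffices to find a single point $u_* \in (1/(2n), 2/(n+3))$ where $P(u_*) > 0$: the resulting sign pattern $+,-,+,-,+$ at five points in $(0,1)$ forces four distinct real roots, which is the maximum since $\deg P = 4$. A convenient intermediate point, lying in the interval precisely when $n \geq 2$, is $u_* = 1/(n+1)$. Substitution into the decomposition of $P$ simplifies to the compact expression
\begin{equation*}
P\bigl(\tfrac{1}{n+1}\bigr) = \tfrac{n-1}{(n+1)^4}\Bigl[\,2n(n-1)(r-2) - (n+1)^3\,\Bigr],
\end{equation*}
which is positive exactly when $r > 2 + (n+1)^3/[2n(n-1)]$. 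The argument is then closed by the elementary cubic comparison $3n^3 + 15n^2 - 25n - 1 > 0$ for $n \geq 2$, equivalent to this threshold being at most $2n+13$. The main obstacle of the proof is purely computational: establishing the decomposition of $P$ and performing the three special-point evaluations, each of which relies on careful use of the explicit Hopf relations $\alpha_t = 2\cot 2t$, $\lambda_1 = -\tan t$ to simplify $\tr\Sh_t$ and $\tr\Sh_t^2$.
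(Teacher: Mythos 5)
Your proof is correct, and I verified the key evaluations: $P(0)=1$, $P(1)=(2n-1)^2$, $P(\tfrac{1}{2n})=-\tfrac{(n-1)(2n-1)^2}{2n^4}$, $P(\tfrac{2}{n+3})=-\tfrac{(n-1)(n+7)(3n^2+2n+11)}{(n+3)^4}$, and $P(\tfrac{1}{n+1})=\tfrac{n-1}{(n+1)^4}\bigl[2n(n-1)(r-2)-(n+1)^3\bigr]$ all check out against the decomposition $P(x)=Q(x)-4n(n+3)(r-2)\,x(1-x)(x-\tfrac{1}{2n})(x-\tfrac{2}{n+3})$ with $Q(x)=A(x)^2-2(n+1)A(x)x(1-x)$, $A(x)=1-4x+(2n+2)x^2$. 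The overall skeleton is the same as the paper's (sign changes of the quartic at the two distinguished points where the $(r-2)$-term vanishes, which is exactly how the paper gets $P(x_0)<0$ and $P(x_2)<0$), but your treatment of part (2) is genuinely different and cleaner: the paper takes the $r$-dependent test point $x_1=\tfrac{1}{2n}+\tfrac{1}{nr}$, expands $2n^4r^4P(x_1)$ as a quartic in $r$, and invokes the Cauchy bound on its roots (with a case split for $n=2,3$) to force $P(x_1)>0$; you instead use the fixed point $\tfrac{1}{n+1}$, at which $P$ is \emph{linear} in $r$ with an explicit closed form, so positivity is equivalent to $r>2+\tfrac{(n+1)^3}{2n(n-1)}$. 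This buys a shorter argument with no root-bound machinery and in fact a sharper threshold (roughly $\tfrac{n}{2}+4$ rather than $2n+13$ for large $n$); the elementary comparison $3n^3+15n^2-25n-1>0$ correctly reduces it to the stated bound $r\geq 2n+13$. Two minor points worth making explicit in a final write-up: the leading coefficient $a_4=4(n^2+3n)r-8(n-1)$ is positive, so $P$ really has degree four and ``exactly four'' follows; and for part (2) as well, none of the four roots can equal $\tfrac{1}{2n}$ (where $P<0$), so all four hypersurfaces are proper.
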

\begin{proof}
    The solution for $ \tr \Sh_t = 0 $ is given by
    \begin{equation*}
        t_0 = \arctan \left( \tfrac{ 1 }{ 2 n - 1 } \right)^{ \frac{ 1 }{ 2 } }.
    \end{equation*}
    Define then $ x_0 = \sin^2 t_0 = ( 2 n )^{ - 1 } $. After simplifying, we get that
    \begin{equation*}
        2 n^4 P_{ A_{ 1 } } ( x_0 ) = - ( n - 1 ) ( 2 n - 1 )^2 < 0,
    \end{equation*}
    where  $ P_{ A_{ 1 } } ( x ) $ is defined in Table~\ref{polynomials}. Thus, by Theorem~\ref{r-tensionfield}, every zero $ x^* $ of $ P_{ A_{ 1 } } ( x ) $ lying in the interval $ ( 0, 1 ) $ corresponds to a proper $ r $-harmonic tube of radius $ t^* = \arcsin \sqrt{ x^* } $ over a totally geodesic $ \mathbb{ C P }^{ n - 1 } $. Since $ P_{ A_{ 1 } } ( 0 ) = 1, P_{ A_{ 1 } } ( 1 ) = ( 2 n - 1 )^2 $ and $ P_{ A_{ 1 } } ( x_0 ) < 0 $ with $0< x_0 < 1$, we obtain~\ref{itema}. 
    
    For~\ref{itemb}, the equation $ \tr \Sh_t + 3 \alpha_t = 0 $ is solved by
    \begin{equation*}
        t_2 = \arctan \left( \tfrac{ 2 }{ n + 1 } \right)^{ \frac{ 1 }{ 2 } },
    \end{equation*}
    so it is natural to define $ x_2 = \sin^2 t_2 = 2 ( n + 3 )^{ - 1 } $, obtaining
    \begin{equation*}
        ( n + 3 )^4 P_{ A_{ 1 } } ( x_2 ) = - ( 3 n^2 + 2 n + 11 ) ( n + 7 ) ( n - 1 ) < 0.
    \end{equation*}
    The goal now is to define $ x_1 $ in such a way that we have $ 0 < x_0 < x_1 < x_2 < 1 $ with $ P_{ A_{ 1 } } ( 0 ), P_{ A_{ 1 } } ( x_1 ), P_{ A_{ 1 } } ( 1 ) > 0 $ and $ P_{ A_{ 1 } } ( x_0 ), P_{ A_{ 1 } } ( x_2 ) < 0 $. By continuity, this would yield~\ref{itemb}.
    
    Take $ x_1 = x_0 + (n r)^{ - 1 } $, and write
    \begin{equation*}
        Q ( r ) := 2 n^4 r^4 P_{ A_{ 1 } } ( x_1) = b_4 r^4 + b_3 r^3 + b_2 r^2 + b_1 r + b_0
    \end{equation*}
    where
    \[ \begin{matrix*}[l]%
            b_4 &= ( 2 n - 1 ) ( n^2 - 1 ), &b_1 = 8 ( n^3 + 4 n^2 - 5 n + 4 ), \\[1ex]
            b_3 &= - 2 ( 2 n^4 + n^3 - 2 n^2 + 7 n - 4 ),  &b_0 = - 16 ( n - 1 ), \\[1ex]
            b_2 &= - 4 ( 2 n^3 - 7 n^2 + 9 n - 6 ).
    \end{matrix*}\]%
    Since $ Q ( r ) $ is a fourth-degree polynomial with $ b_4 > 0 $, it suffices to find a bound for its largest root. A classic analysis shows that for $ n \geq 4 $:
    \begin{equation*}
        \max \left\{ \left| \tfrac{ b_0 }{ b_4 } \right|, \left| \tfrac{ b_1 }{ b_4 } \right|, \left| \tfrac{ b_2 }{ b_4 } \right|, \left| \tfrac{ b_3 }{ b_4 } \right| \right\} = \left| \tfrac{ b_0 }{ b_4 } \right| < 2 ( n + 3 ).
    \end{equation*}
    For $ n = 2 $ and $ n = 3 $, $ \left| b_1 b_4^{ - 1 } \right| $ dominates, but since this fraction is bounded by $ 16 $ for every $ n \geq 2 $, we can modify the former bound so that it also covers these cases. For instance, 
    \begin{equation*}
        \max \left\{ \left| \tfrac{ b_0 }{ b_4 } \right|, \left| \tfrac{ b_1 }{ b_4 } \right|, \left| \tfrac{ b_2 }{ b_4 } \right|, \left| \tfrac{ b_3 }{ b_4 } \right| \right\} < 2 ( n + 6 ).
    \end{equation*}
    Hence, using the Cauchy bound, we obtain that condition $ r \geq 2 n + 13 $ suffices to ensure $ P_{ A_{ 1 } } ( x_1 ) < 0 $, obtaining the result.
\end{proof}

\begin{theorem}
    Let $ M_t, 0 < t < \frac{ \pi }{ 4 } $, be a family of type $ B $ Hopf hypersurfaces in $ \CPn $, $ n \geq 2 $. Then:
    \begin{enumerate}
        \item If $ r \geq \min \{ 6001, 12 n^2 + 16 n - 19 \} $, the family $ M_t $ contains at least two proper $ r $-harmonic hypersurfaces.
        \item If $ r \geq \max \{ 6001, 12 n^2 + 16 n - 19 \} $, the family $ M_t $ contains exactly four proper $ r $-harmonic hypersurfaces.
    \end{enumerate}
\end{theorem}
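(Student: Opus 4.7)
The plan is to mirror the approach of Theorem~\ref{tha1}, but since the natural canonical evaluations of $P_B$ at the type-$B$ critical radii turn out to be positive rather than negative, I would need to supply two auxiliary $r$-dependent test points and two thresholds on $r$ to exhibit the required sign changes.

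First I would compute $P_B$ at the four canonical points: direct substitution gives $P_B(0)=4$ and $P_B(1)=4(n-1)^2$ (the coefficient of $r$ telescopes to zero), while the radii satisfying $\tr \Sh_t=0$ and $\tr \Sh_t+3\alpha_t=0$ correspond respectively to $x_0=1/n$ and $x_2=4/(n+3)$, yielding
\[
n^4 P_B(1/n)=2(n-1)^3(3n-1), \qquad (n+3)^4 P_B(4/(n+3))=4(3n+13)(n+11)(n-1)^2,
\]
both $r$-independent and strictly positive for $n\geq 2$ (the $r$-dependent part of~\eqref{equation} vanishes by design at these radii). Decomposing $P_B(x)=r f(x)+g(x)$ and factoring gives $f(x)=n(n+3)\, x(x-1)(x-1/n)(x-4/(n+3))$, which is negative exactly on $(0,1/n)\cup(4/(n+3),1)$; so for $r$ large, $P_B$ must become negative in each of these two disjoint subintervals.

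To quantify this I would substitute $y_1=2/r\in(0,1/n)$ and $y_2=1-5/r\in(4/(n+3),1)$, chosen in accordance with the asymptotic positions of the extremal roots of $P_B$ as $r\to\infty$ (close to $1/r$ and to $1-4/r$ respectively). A direct substitution produces
\[
r^4 P_B(2/r)=-4\bigl[r^4-6n r^3+(2n^2+26n-16)r^2-(12n^2+16n-20)r+(8n-8)\bigr],
\]
and the bracketed monic polynomial has largest non-leading absolute coefficient $12n^2+16n-20$ for every $n\geq 2$; hence the Cauchy bound forces $P_B(2/r)<0$ whenever $r\geq R_1:=12n^2+16n-19$. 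A Taylor expansion at $x=1$ analogously yields
\[
r^4 P_B(1-5/r)=-(n-1)^2 r^4+15(n-1)(n+6)r^3-25(3n^2+36n-19)r^2+125(n^2+21n+2)r-1250(n-1).
\]
After dividing by $-(n-1)^2$, the dominant normalised coefficient is $125(n^2+21n+2)/(n-1)^2$, whose $n$-derivative equals $-(23n+25)/(n-1)^3<0$, so it strictly decreases in $n$ and attains its maximum $6000$ at $n=2$. The Cauchy bound then gives $P_B(1-5/r)<0$ uniformly in $n\geq 2$ once $r\geq R_2:=6001$.

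Combining: for $r\geq\min\{R_1,R_2\}$ at least one of $y_1,y_2$ produces $P_B(y_i)<0$, so $P_B$ has at least two sign changes in $(0,1)$ and at least two proper $r$-harmonic members appear via Theorem~\ref{r-tensionfield}. For $r\geq\max\{R_1,R_2\}$ both points work, giving the sign pattern $+,-,+,+,-,+$ along $0<y_1<x_0<x_2<y_2<1$; this yields four distinct roots in $(0,1)$, which is maximal for a quartic. The main obstacle will be the clean algebraic execution of the two substitutions (each resulting in an explicit quartic in $r$) together with the monotonicity verification singling out $n=2$ as the worst case responsible for the universal constant $R_2=6001$; the constants $2$ and $5$ in the choices of $y_1,y_2$ are calibrated to overshoot the corresponding asymptotic roots by enough to make the leading Taylor term strictly change sign in spite of the $n$-dependent corrections.
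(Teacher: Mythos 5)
Your proposal is correct and follows essentially the same route as the paper: the same test points $x_0=2/r$ and $x_2=1-5/r$, the same evaluation at the minimal radius $x=1/n$ (which is positive, forcing the auxiliary points), and the same Cauchy-bound argument producing exactly the thresholds $12n^2+16n-19$ and $1+125(n^2+21n+2)/(n-1)^2\big|_{n=2}=6001$. The only additions are expository — the factorization of the $r$-linear part of $P_B$ and the fully expanded quartics in $r$, which the paper leaves implicit — plus one harmless slip (the derivative of $125(n^2+21n+2)/(n-1)^2$ is $-125(23n+25)/(n-1)^3$, not $-(23n+25)/(n-1)^3$, which does not affect the monotonicity conclusion).
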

\begin{proof}
    In this case, we have that $ P_{ B } ( 0 ) = 4 $ and $ P_{ B } ( 1 ) = 4 ( n - 1 )^2 $. The equation $ \tr \Sh_t = 0 $ is solved by
    \begin{equation*}
        t_1 = \frac{ 1 }{ 2 } \arctan( \sqrt{ n - 1 } ),
    \end{equation*}
    and if we plug $ x_1 = \cos^2 2 t_1 = n^{ - 1 } $ in $ P_B $ we see that 
    \begin{equation*}
        n^4 P_B ( x_1 ) = 2 ( 3 n - 1 ) ( n - 1 )^3 > 0.
    \end{equation*}
    Thus, this is not enough to ensure the existence of proper $ r $-harmonic hypersurfaces. To solve this problem, define
    \begin{equation*}
        x_0 = 2 r^{ - 1 } \quad \text{and} \quad x_2 = 1 - 5 r^{ - 1 }.
    \end{equation*}
    Then we have $ P_B ( 0 ), P_B ( x_1 ), P_B ( 1 ) > 0 $ and
    \begin{equation*}
        \begin{split}
            P_B ( x_0 ) &= - 4 + O ( r^{ - 1 } ), \\
            P_B ( x_2 ) &= - ( n - 1 )^2 + O ( r^{ - 1 } )
        \end{split}
    \end{equation*}
    as $ r \to \infty $. Hence, a similar argument as in the proof of the previous theorem using the Cauchy bound shows that, to ensure $ P_B ( x_0 ) < 0 $, it is enough to ask for
    \begin{equation*}
        r \geq 12 n^2 + 16 n - 19.
    \end{equation*}
    On the other hand, for $ P_B ( x_0 ) < 0 $ to hold it suffices to take 
    \begin{equation*}
        r \geq 126 + \frac{ 125 ( 23 n + 1 ) }{ ( n - 1 )^2 }.
    \end{equation*}
    Since the right hand side of the inequality above is bounded by $ 6001 $, the result follows.
\end{proof}

\begin{theorem}
    Let $ M_t, 0 < t < \frac{ \pi }{ 4 } $, be a family of type $ C $ Hopf hypersurfaces in $ \CPn $, $ n \geq 5 $. Then:
    \begin{enumerate}
        \item If $ r \geq 300 $, the family $ M_t $ contains at least two proper $ r $-harmonic hypersurfaces.
        \item If $ 4 r \geq 1125 n^2 + 375 n - 1996 $, the family $ M_t $ contains exactly four proper $ r $-harmonic hypersurfaces.
    \end{enumerate}
\end{theorem}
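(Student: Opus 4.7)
The approach mirrors the strategy used for types $A_1$ and $B$: by Theorem~\ref{r-tensionfield} and Table~\ref{polynomials}, proper $r$-harmonic type $C$ Hopf hypersurfaces in $\{M_t\}$ correspond bijectively to roots of $P_C(x)$ in the open interval $(0, 1)$, via the substitution $x = \cos^2 2t$. The plan is to locate several test points where the sign of $P_C$ can be pinned down and then invoke the intermediate value theorem.

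First, I would evaluate the polynomial at the endpoints. A direct substitution gives $P_C(0) = a_0 = 16$, while $P_C(1) = a_4 + a_3 + a_2 + a_1 + a_0$ simplifies (after cancellation of the $r$-dependent terms) to $4(n-2)^2$; both are strictly positive for $n \geq 5$. Next I would identify the minimal point. Using the cotangent identities in the remark above together with $\cot t - \tan t = 2 \cot 2t$, the mean curvature collapses to $\tr \Sh_t = 2(n-2)\cot 2t - 4\tan 2t$, whose unique zero in $(0, \pi/4)$ is characterized by $\tan^2 2t = (n-2)/2$, equivalently $x_1 := 2/n$. At this value the $r$-dependent terms in equation~\eqref{equation} vanish identically; an analogous simplification of $\tr\Sh_t^2$ gives $\tr \Sh_t^2 = 6n - 2$, so the left-hand side of~\eqref{equation} evaluates to $(6n - 2)(4n - 4) = 8(3n - 1)(n - 1) > 0$. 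Consequently $P_C(2/n) > 0$, and the minimal hypersurface itself is not proper $r$-harmonic.

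To produce sign changes, I would introduce perturbation points of the form $x_0 = c_0 r^{-1}$ near $0$ and $x_2 = 1 - c_2 r^{-1}$ near $1$, in analogy with the type $B$ proof. An asymptotic expansion using the linear dependence of the coefficients $a_i$ on $r$ yields $P_C(c_0 r^{-1}) = (16 - 4 c_0) + O(r^{-1})$ as $r \to \infty$, so any choice $c_0 > 4$ makes $P_C(x_0) < 0$ for $r$ sufficiently large; a parallel expansion controls $P_C(x_2)$. For part (1), producing even a single value $x^* \in (0, 1)$ with $P_C(x^*) < 0$ already forces two sign changes and hence at least two roots in $(0, 1)$ by the intermediate value theorem. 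For part (2), the alternating pattern $+, -, +, -, +$ at the ordered five points $0 < x_0 < 2/n < x_2 < 1$ forces four sign changes, which produces exactly four real roots of the quartic $P_C$ in $(0, 1)$ (a quartic cannot have more).

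The final step is to convert the asymptotic sign statements into the explicit thresholds $r \geq 300$ and $4r \geq 1125 n^2 + 375 n - 1996$. Multiplying $P_C(x_i)$ by the appropriate power of $r$ produces a polynomial in $r$ whose leading coefficient has known sign, and applying the Cauchy bound (as in the proof of Theorem~\ref{tha1} and the type $B$ theorem) gives an explicit upper bound for its largest real root. The main bookkeeping task is calibrating $c_0$ and $c_2$ so that the resulting thresholds collapse to the stated inequalities, and I expect the Cauchy computation for $x_0$ near $1$ to give the $O(n^2)$ bound required in part~(2). The most delicate point is obtaining the $n$-independent constant $300$ in part~(1); this probably requires replacing the $r^{-1}$-scaled perturbation near $0$ by a fixed test point in the interval $(2/n, 1)$, for instance $x = 1/2$, at which one computes directly $16\, P_C(1/2) = (n-1)[-(n-4) r + 2(4n-5)]$, so that the resulting threshold does not inherit powers of $n$ through the Cauchy bound.
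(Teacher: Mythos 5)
Your proposal is correct and follows essentially the same route as the paper: the paper also takes the test points $x_0=5r^{-1}$, $x_1=2n^{-1}$, $x_2=1-4r^{-1}$, verifies the sign pattern $+,-,+,-,+$ together with $P_C(0)=16$, $P_C(1)=4(n-2)^2$ and $n^4P_C(2/n)=8(3n-1)(n-1)(n-2)^2>0$, and extracts both thresholds from the Cauchy bound (the condition $4r\geq 1125n^2+375n-1996$ coming from $r^4P_C(5/r)$ and the $n$-independent $300$ from $r^4P_C(1-4/r)$, whose coefficient ratios are uniformly bounded in $n$). Your closing concern about part (1) is therefore unnecessary, although your alternative fixed test point with $16\,P_C(1/2)=(n-1)[-(n-4)r+2(4n-5)]$ is a valid (and in fact sharper) way to get an $n$-independent threshold.
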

\begin{proof}
    Note that $ P_{ C } ( 0 ) = 16 $ and $ P_C ( 1 ) = 4 ( n - 2 )^2 $. Define
    \begin{equation*}
        x_0 = 5 r^{ - 1 }, \quad x_1 = 2 n^{-1} \quad \text{and} \quad x_2 = 1 - 4 r^{ - 1 }.
    \end{equation*}
    Since
    \begin{equation*}
        \begin{split}
            P_C ( x_0 ) &= - 4 + O ( r^{ - 1 } ), \\[1ex]
            P_C ( x_2 ) &= - 12 ( n - 2 ) + O ( r^{ - 1 } )
        \end{split}
    \end{equation*}
     as $ r \to \infty $, and
     \begin{equation*}
          n^4 P_C ( x_1 ) = 8 ( 3 n - 1 ) ( n - 1 ) ( n - 2 )^2,
     \end{equation*}
     the result comes after using the Cauchy bound as we did before.
\end{proof}

\pagebreak
\begin{theorem}
    Let $ M_t, 0 < t < \frac{ \pi }{ 4 } $, be a family of type $ D $ Hopf hypersurfaces in $ \mathbb{ C P }^9 $. Then:
    \begin{enumerate}
        \item If $ r \geq 32 $, the family $ M_t $ contains at least two proper $ r $-harmonic hypersurfaces.
        \item If $ r \geq 89 $, the family $ M_t $ contains exactly four proper $ r $-harmonic hypersurfaces.
    \end{enumerate}
\end{theorem}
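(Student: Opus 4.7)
The argument is a concrete instance of the template already used for types $A_1$, $B$ and $C$ in this section: count the roots in $(0,1)$ of the quartic $P_D(x)$ recorded in Table~\ref{polynomials} by exhibiting points of $[0,1]$ at which the signs of $P_D$ alternate. Since $P_D$ has degree four, a sign pattern $+,-,+,-,+$ at five well-chosen points locates all four real roots, whereas a single interior negative value already yields at least two roots via the intermediate value theorem.

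First, I would collect the values at which $P_D$ is independent of $r$. Writing $P_D(x)=r\,Q(x)+R(x)$, the polynomial $Q(x)=27x^4-48x^3+25x^2-4x$ factors as $x(x-1)(9x-4)(3x-1)$, so the anchor points are $x=0,\,1/3,\,4/9,\,1$. The two interior zeros carry geometric meaning: they correspond respectively to $\tr \Sh_t+3\alpha_t=0$ and $\tr \Sh_t=0$, the same two special loci that were singled out in the proof for type~$A_1$. A direct evaluation of $R$ shows that $P_D$ is positive at each of these four anchors.

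Next, following the pattern used for types $B$ and $C$, I would introduce $x_0=c_0/r$ and $x_2=1-c_2/r$ for suitable small positive constants $c_0,c_2$. A routine expansion shows that $r^4P_D(c_0/r)$ and $r^4P_D(1-c_2/r)$ are quartic polynomials in $r$ whose leading coefficients can be made negative by an appropriate choice of $c_0$ and $c_2$; both polynomials are therefore eventually negative in $r$. The precise cut-offs are then pinned down either via the Cauchy bound recalled in Section~\ref{sectionpreliminaries} or, since for $n=9$ the coefficients of $P_D$ are small explicit integers, by direct numerical verification. The outcome is a threshold $T_0$ past which $P_D(x_0)<0$ and a threshold $T_2$ past which $P_D(x_2)<0$; an explicit computation yields $\min(T_0,T_2)=32$ and $\max(T_0,T_2)=89$.

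Combining the sign data concludes the proof. For part~(1), $r\ge 32$ gives $P_D(0),P_D(1)>0$ together with a single interior point at which $P_D$ is negative, so the intermediate value theorem produces at least two roots in $(0,1)$. For part~(2), $r\ge 89$ additionally supplies negativity at the other of the two points $\{x_0,x_2\}$; combined with positivity at the four anchors and the orderings $0<x_0<1/3<4/9<x_2<1$ (valid once $r\ge 89$), the sign pattern $+,-,+,+,-,+$ across these six points forces four sign changes, accounting for all four roots of the quartic. The main technical burden, as in the preceding theorems of this section, is the bookkeeping behind the two explicit thresholds, which reduces to estimating the largest real root of an explicit quartic polynomial in $r$ in each of the two cases.
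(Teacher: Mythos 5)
Your proposal follows essentially the same route as the paper: both establish the sign pattern of $P_D$ at $0$, at a point $x_0$ of order $r^{-1}$, at the $r$-independent anchor $x=4/9$ (where $\tr\Sh_t=0$), at a point $x_2=1-O(r^{-1})$, and at $1$, and then count roots by the intermediate value theorem, with the thresholds $32$ and $89$ pinned down by analyzing the resulting explicit quartics in $r$ (the paper does this via a concavity observation on $r^4P_D(x_0)$ and checking $r=88,89$; your extra anchor at $x=1/3$, where $\tr\Sh_t+3\alpha_t=0$, is harmless but redundant). The only caveat is that the numerical thresholds depend on the exact coefficients, and the table's entry $a_1=-4r+44$ appears inconsistent with the value $P_D(1)=25$ used in the paper's proof (direct computation gives $a_1=-4r-44$); with the corrected coefficient your sign analysis at the anchors and the choice of suitable $c_0,c_2$ go through exactly as you describe.
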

\begin{proof}
    Note that $ P_{ D } ( 0 ) = 16 $ and $ P_D ( 1 ) = 25 $. Define
    \begin{equation*}
        x_0 = 5 r^{ - 1 }, \quad x_1 = \tfrac{ 4 }{ 9 } \quad \text{and} \quad x_2 = 1 - 3 r^{ - 1 }.
    \end{equation*}

    Since $ P_D ( x_1 ) > 0 $, the only conditions we need to ensure are $ P_D ( x_0 ) < 0 $ and $ P_D ( x_2 ) < 0 $. A simple computation shows
    \begin{equation*}
        L ( r ) := \frac{ d^2 }{ d r^2 } r^4 P_D ( Z_0 ) = - 48 r^2 + 2430 r - 9700.
    \end{equation*}
    Hence, we know that if $ r \geq 47 $ then the function $ L ( r ) $ is concave on $ r $. As $ { P_D ( 88 ) > 0 > P_D ( 89 ) } $, we obtain that $ P_D ( x_0 ) < 0 $ if $ r \geq 89 $. A similar argument applies to the other condition.     
\end{proof}

\begin{theorem}
    Let $ M_t, 0 < t < \frac{ \pi }{ 4 } $, be a family of type $ E $ Hopf hypersurfaces in $ \mathbb{ C P }^{ 15 } $. Then:
    \begin{enumerate}
        \item If $ r \geq 27 $, the family $ M_t $ contains at least two proper $ r $-harmonic hypersurfaces.
        \item If $ r \geq 100 $, the family $ M_t $ contains exactly four proper $ r $-harmonic hypersurfaces.
    \end{enumerate}
\end{theorem}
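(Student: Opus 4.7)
The plan is to follow the same strategy used in the proofs of the preceding theorems (for types $B$, $C$, $D$). A direct computation using the coefficients in Table~\ref{polynomials} gives $P_E(0) = 72 > 0$ and, since $135 - 234 + 117 - 18 = 0$, also $P_E(1) = -14 + 100 + 184 - 180 + 72 = 162 > 0$. Thus the endpoints of the interval of interest are both positive, and the task is to locate points inside $(0,1)$ where $P_E$ changes sign.

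For part (1), it suffices to exhibit a single point $x_0 \in (0,1)$ with $P_E(x_0) < 0$, since then the intermediate value theorem immediately produces two roots in $(0,1)$, which by Theorem~\ref{r-tensionfield} correspond to two proper $r$-harmonic hypersurfaces. Following the pattern from types $C$ and $D$, I would set $x_0 = 5 r^{-1}$. Expanding,
\begin{equation*}
    P_E(5 r^{-1}) = 72 - 18 \cdot 5 + O(r^{-1}) = -18 + O(r^{-1})
\end{equation*}
as $r \to \infty$, so this is negative for $r$ large. The precise threshold $r \geq 27$ will be extracted as in the proof of the type $D$ theorem: consider the auxiliary polynomial $L(r) := r^4 P_E(5 r^{-1})$, compute $L''(r)$ to locate a range of $r$ on which $L$ is concave, and then verify the sign of $L$ at a single value near $27$ together with the sign of its derivative.

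For part (2), the strategy is to produce three points $0 < x_0 < x_1 < x_2 < 1$ with $P_E(x_0) < 0$, $P_E(x_1) > 0$, and $P_E(x_2) < 0$. Since $P_E$ has degree $4$ and we already have $P_E(0), P_E(1) > 0$, this forces four sign changes, hence exactly four roots in $(0,1)$ and four proper $r$-harmonic hypersurfaces in the family. Keeping $x_0 = 5 r^{-1}$, I would take $x_2 = 1 - c r^{-1}$ for a suitable constant $c$. Using $P_E'(1) = 4 a_4 + 3 a_3 + 2 a_2 + a_1 = 54 r + 432$, a first-order expansion gives
\begin{equation*}
    P_E(1 - c r^{-1}) = 162 - 54 c + O(r^{-1}),
\end{equation*}
so $c = 4$ (or any $c > 3$) suffices asymptotically, and again a concavity argument pins down the precise threshold. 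For the intermediate point I would choose $x_1$ as a fixed value (independent of $r$) in $(0,1)$ for which $P_E(x_1) > 0$ can be checked directly from the formula; a natural candidate is a value such as $x_1 = 2/5$, at which $P_E(x_1)$ is a linear function of $r$ with positive slope and can be verified positive for all $r \geq 27$.

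The main obstacle is the bookkeeping required to turn the asymptotic signs above into the explicit thresholds $r \geq 27$ and $r \geq 100$. As in the type $D$ case, this reduces to showing that certain quartic-in-$r$ polynomials are eventually concave and then checking values at a single integer; no new geometric input is needed beyond the explicit coefficients in Table~\ref{polynomials}.
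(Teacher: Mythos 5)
Your proposal is correct and follows essentially the same route as the paper, which simply defers to the type $D$ argument after choosing the very same test points $x_0 = 5r^{-1}$, $x_1 = \tfrac{2}{5}$, $x_2 = 1 - 4r^{-1}$. One minor slip: at $x_1 = \tfrac{2}{5}$ the coefficient of $r$ cancels ($625\,P_E(\tfrac{2}{5}) = 22176$), so $P_E(x_1)$ is a positive constant rather than linear in $r$ with positive slope --- which only makes that verification easier.
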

\begin{proof}
    The proof follows in the same way as in the previous theorem after defining
    \begin{equation*}
        x_0 = 5 r^{ - 1 }, \quad x_1 = \tfrac{ 2 }{ 5 } \quad \text{and} \quad x_2 = 1 - 4 r^{ - 1 }.
    \end{equation*}
\end{proof}

Only the case of type $A_2$ hypersurfaces is left. Even though this is the richest case, the dependence on one additional parameter $k$ leads to some extra difficulties. In~\cite{balado2024explicit}, the author constructed two uncountable families of explicit harmonic selfmaps by translation of these hypersurfaces in the normal direction.

\begin{theorem}\label{tha2}
    Let $ M_t, 0 < t < \frac{ \pi }{ 2 } $, be a family of type $ A_2 $ Hopf hypersurfaces in $ \CPn $, $ n \geq 2 $, $ 1 \leq k \leq n - 2 $. Then:
    \begin{enumerate}
        \item\label{A21} The family $ M_t $ contains at least two proper $ r $-harmonic hypersurfaces for $ r \geq 2 $. If, in particular, $ 2 k = n - 1 $, these hypersurfaces are tubes of radius $ t $ over a totally geodesic $ \mathbb{ C P }^{ \frac{ n - 1 }{ 2 } } $, with $ t $ given by
            \begin{equation*}\small
                \cos 4 t = \frac{ n \sqrt{ \omega } - 2 ( 2 n - 1 ) ( n + 1 ) }{ n ( n + 3 ) r - 2 ( n - 1 ) } 
            \end{equation*}
            where
            \begin{equation*}
                \omega = ( n + 3 )^2 r^2 - 8 n ( n + 3 ) r + 16 ( n^2 + 2 n - 2 ).
            \end{equation*}
        \item\label{A22} Let
            \begin{equation*}
                k_1 := \frac{ 5 n^2 - 4 n + 2 - n \sqrt{ 13 n^2 - 8 n + 4 } }{ 4 ( n - 1 ) }.
            \end{equation*}
            If $ k < k_1 $ and
            \begin{equation*}
                r \geq 4 ( 22 k^4 + 85 k^3 + 123 k^2 + 54 k + 8 ) k^2,
            \end{equation*}
            the family $ M_t $ contains exactly four proper $ r $-harmonic hypersurfaces. 
        \item\label{A23} Let
            \begin{equation*}
                k_2 := \frac{ n \sqrt{ 13 n^2 - 8 n + 4 } - n^2 - 4 n + 2 }{ 4 ( n - 1 ) }.
            \end{equation*}
            If $ k > k_2 $ and
            \begin{equation*}
                r \geq 4 ( 6 k^4 + 19 k^3 + 39 k^2 + 8 k + 2 ) (2k+1) k,
            \end{equation*}
            the family $ M_t $ contains exactly four proper $ r $-harmonic hypersurfaces.
    \end{enumerate}
\end{theorem}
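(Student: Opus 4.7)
The plan is to follow the blueprint of the theorems for types $A_1, B, C, D, E$, analyzing the polynomial $P_{A_2}$ via sign changes. Direct evaluation gives $P_{A_2}(0) = (2k+1)^2 > 0$ and $P_{A_2}(1) = (2n-2k-1)^2 > 0$. Writing $x = \cos^2 t$ and using the type $A_2$ principal curvatures, one checks
\begin{equation*}
    \sqrt{x(1-x)}\,\tr \Sh_t = 2nx - (2k+1), \qquad \sqrt{x(1-x)}\,\alpha_t = 2x-1,
\end{equation*}
so that $\tr \Sh_t = 0$ at $x_0 := (2k+1)/(2n)$ and $\tr \Sh_t + 3\alpha_t = 0$ at $x_2 := (k+2)/(n+3)$. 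At both points the $r$-dependent part of~\eqref{equation} vanishes identically; writing $P_{A_2}(x) = P_0(x) + r\,P_r(x)$, this forces $P_r(x_0) = P_r(x_2) = 0$, and inspecting the $r$-coefficients of Table~\ref{polynomials} shows $P_r(0) = P_r(1) = 0$ as well. This gives the factorization $P_r(x) = 4n(n+3)\,x(x-x_0)(x-x_2)(x-1)$, which in particular is strictly positive on the open interval between $x_0$ and $x_2$.

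From the identity $x(1-x)\tr \Sh_t^2 = 2(n+1)x^2 - 4(k+1)x + (2k+1)$ one computes $\tr \Sh_t^2 = 2(n-1)$ at $x_0$, so the reduced left-hand side of~\eqref{equation} evaluates to $-8(n-1)$ there and $P_{A_2}(x_0) < 0$. A parallel computation at $x_2$ shows that $x_2(1-x_2)[\tr \Sh_t^2 - 2(n+1)]$ is a quadratic in $n+3$ with negative leading coefficient and discriminant $-8(2k+4)^2 < 0$, hence strictly negative, so $P_{A_2}(x_2) < 0$. Combined with $P_{A_2}(0), P_{A_2}(1) > 0$, the intermediate value theorem yields at least two roots in $(0,1)$ for any $r \geq 2$, proving~\ref{A21}. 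For the special case $2k = n-1$, the equal multiplicities of $\lambda_1$ and $\lambda_2$ together with $\cot t - \tan t = 2\cot 2t$ collapse $\tr \Sh_t$ to $2n\cot 2t$ and $\tr \Sh_t^2$ to $4n\cot^2 2t + 2(n-1)$. Substituting $u = \cot^2 2t$, equation~\eqref{equation} reduces to
\begin{equation*}
    4n^2 u^2 - n[(n+3)r - 4n]u - 2(n-1) = 0,
\end{equation*}
whose discriminant is exactly $\omega$; the product of roots $-2(n-1)/(4n^2) < 0$ forces exactly one positive root, and $\cos 4t = (u-1)/(u+1)$ rationalized by $(n+3)r + 4n - \sqrt{\omega}$ yields the stated formula.

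For~\ref{A22} and~\ref{A23} we have $x_0 \neq x_2$, and by the factorization above $P_r > 0$ strictly between them. For any $x_1$ chosen in that interval, the value $P_{A_2}(x_1) = P_0(x_1) + r P_r(x_1)$ is positive for all $r > -P_0(x_1)/P_r(x_1)$, producing the sign sequence $+, -, +, -, +$ on $\{0, x_0, x_1, x_2, 1\}$ (in the appropriate order) and hence exactly four roots of $P_{A_2}$ in $(0,1)$.

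The main difficulty is choosing $x_1$ so that the threshold on $r$ can be bounded by a function of $k$ alone. Following the strategy of Theorem~\ref{tha1}, I would take $x_1$ of the form $x_0 \pm \varepsilon/r$ with $\varepsilon$ a suitable constant in $k$ and apply the Cauchy bound to the polynomial in $r$ obtained after clearing denominators. The condition that this Cauchy-type estimate be independent of $n$ reduces, after some algebra, to the quadratic inequalities
\begin{align*}
    &4(n-1)k^2 - 2(5n^2-4n+2)k + (n-1)(3n^2-2n+1) > 0, \\
    &4(n-1)k^2 + 2(n^2+4n-2)k - (n-1)(3n^2+2n-1) > 0,
\end{align*}
whose positive roots are precisely $k_1$ and $k_2$. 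The resulting Cauchy bound, after the dust settles, produces the explicit thresholds $r \geq 4(22k^4 + 85k^3 + 123k^2 + 54k + 8)k^2$ and $r \geq 4(6k^4 + 19k^3 + 39k^2 + 8k + 2)(2k+1)k$. The delicate part is this final step: picking $x_1$ cleverly enough that both the $n$-dependence in $P_0(x_1)/P_r(x_1)$ is cancelled by the assumption $k < k_1$ (resp.~$k > k_2$), and the resulting estimate gives the stated explicit polynomial bounds in $k$.
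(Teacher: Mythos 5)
Your treatment of part~\ref{A21} and of the special case $2k=n-1$ is correct, and in places takes a genuinely different route from the paper. The identities $\sqrt{x(1-x)}\,\tr\Sh_t=2nx-(2k+1)$ and $x(1-x)\tr\Sh_t^2=2(n+1)x^2-4(k+1)x+(2k+1)$ check out, your evaluation $\tr\Sh_t^2=2(n-1)$ at $x_0=(2k+1)/(2n)$ reproduces the paper's computation $2n^4P_{A_2}(x_0)=-(n-1)(2n-2k-1)^2(2k+1)^2$, and your observation that $P_{A_2}$ is also \emph{unconditionally} negative at the second distinguished point $x_2=(k+2)/(n+3)$ (where $\tr\Sh_t+3\alpha_t=0$), via the factorization of the $r$-linear part $P_r(x)=4n(n+3)x(x-x_0)(x-x_2)(x-1)$, is a clean structural insight that the paper does not use: the paper instead manufactures its second negative evaluation point at $1-r^{-1}$ via an asymptotic expansion. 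Your derivation of the explicit radius when $2k=n-1$ (quadratic in $u=\cot^2 2t$, one positive root, rationalization of $(u-1)/(u+1)$) is valid and arguably more direct than the paper's reduction of the quartic to a biquadratic; I verified it reproduces the stated $\omega$ and the formula for $\cos 4t$.

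The genuine gap is in parts~\ref{A22} and~\ref{A23}, which is where the actual content of the theorem beyond part~\ref{A21} lies. Your argument for positivity between $x_0$ and $x_2$ is only stated for a \emph{fixed} $x_1$ in that interval, where indeed $P_{A_2}(x_1)>0$ for $r>-P_0(x_1)/P_r(x_1)$ --- but that threshold depends on $n$ (the interval has length $3|n-2k-1|/(2n(n+3))$ and shrinks both as $n\to\infty$ and as $k\to(n-1)/2$), and no condition on $k$ ever appears. When you then switch to $x_1=x_0\pm\varepsilon/r$ to kill the $n$-dependence, the inequality $r>-P_0(x_1)/P_r(x_1)$ becomes vacuous, since $P_r(x_1)=O(r^{-1})$ there; what is actually needed is the expansion $P_{A_2}(x_0\pm\varepsilon/r)=P_0(x_0)\pm\varepsilon P_r'(x_0)+O(r^{-1})$, whose leading coefficient is (a positive multiple of) the paper's $\eta_1(n,k)$ resp.\ $\eta_2(n,k)$ --- and \emph{this} is where the hypotheses $k<k_1$, $k>k_2$ enter. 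You correctly name the two quadratic inequalities whose roots are $k_1,k_2$, but you do not derive them from your construction, and you do not derive the explicit thresholds $r\geq 4(22k^4+85k^3+123k^2+54k+8)k^2$ and $r\geq 4(6k^4+19k^3+39k^2+8k+2)(2k+1)k$ at all: the phrases ``after some algebra'', ``after the dust settles'' and your own admission that ``the delicate part is this final step'' stand in for the entire quantitative argument. The paper carries this out by explicitly bounding the error terms of $r^4P_{A_2}(\cdot)$ as a polynomial in $r$ (abandoning the Cauchy bound, which it finds unmanageable here); until you perform the analogous estimates and show they yield bounds depending on $k$ alone, parts~\ref{A22} and~\ref{A23} are not proved.
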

\begin{proof}
    If $ 2 k = n - 1 $, equation $ P_{ A_{ 2 } } ( x ) = 0 $ reads
    \begin{equation}\label{explicitA2}
        \begin{split}
            & 4 [ n ( n + 3 ) r - 2 ( n - 1 ) ] \, x^{ 4 } - 8 [ n ( n + 3 ) r - 2 ( n - 1 ) ] \, x^{ 3 } \\[1ex] 
            &+ [ 5 n ( n + 3 ) r + 4 ( n^2 - 2 n + 2 ) ] \, x^{ 2 } - [ n ( n + 3 ) r + 4 n^2 ] \, x + n^2 = 0.
        \end{split}
    \end{equation}
    Note that, in this case, the relation $ a_3^3 - 4 a_4 a_3 a_2 + 8 a_4^2 a_1 = 0 $ holds for the coefficients of $ P_{ A_{ 2 } } ( x ) $. Thus~\eqref{explicitA2} can be reduced to a biquadratic equation, obtaining that the real zeros are given by
    \begin{equation*}
        x_{ \pm }= \tfrac{ 1 }{ 2 } \pm \tfrac{ 1 }{ 2 } [ 2 n ( n + 3 ) r - 4 ( n - 1 ) ]^{ - \frac{ 1 }{ 2 } } [ n ( n + 3 ) r - 4 (n^2 + n - 1 ) + n \sqrt{ \omega } ]^{ \frac{ 1 }{ 2 } }.
    \end{equation*}

    Fix now $ k \neq \tfrac{ n - 1 }{ 2 } $. Note that $ P_{ A_{ 2 } } ( 0 ) = ( 2 k + 1 )^2 $ and $ P_{ A_{ 2 } } ( 1 ) = ( 2 k - 2 n  + 1 )^2 $, so if we find $ x_1 \in ( 0, 1 ) $ such that $ P_{ A_{ 2 } } ( x_1 ) < 0 $, we will have ensured the existence of at least two $ r $-harmonic hypersurfaces. The equation $ \tr \Sh_t = 0 $ is solved by
    \begin{equation*}
        t^* = \arctan \left( \tfrac{ 2 n - 2 k - 1 }{ 2 k + 1 } \right)^{ \frac{ 1 }{ 2 } }.
    \end{equation*}
    Plugging $ x^* = \cos^2 t^* = ( 2 k + 1 ) ( 2 n )^{ - 1 } $ in $ P_{ A_{ 2 } } $ we obtain
    \begin{equation*}
        2 n^4 P_{ A_{ 2 } } ( x^* ) = - ( n - 1 ) ( 2 n - 2 k - 1 )^2 ( 2 k + 1 )^2 < 0,
    \end{equation*}
    from where~\ref{A21} follows.

    Assume now that $ k < k_1 $, where $ k_1 $ is defined as stated in the theorem, and define
    \begin{equation*}
        x_0 = x^*, \quad x_1 = x^* + r^{ - 1 } \quad \text{and} \quad x_2 = 1 - r^{ - 1 }.
    \end{equation*}
    A direct computation shows that
    \begin{equation*}
        \begin{split}
            2 n^4 P_{ A_{ 2 } } ( x_1 ) &= ( 2 k + 1 ) ( 2 n - 2 k - 1 ) \eta_1 ( n, k ) + O ( r^{ - 1 } ) \\[1ex]
            P_{ A_{ 2 } } ( x_2 ) &= - 3 ( 2 n - 2 k - 1 ) + O ( r^{ - 1 } )
        \end{split}
    \end{equation*}
    as $ n, r \to \infty $, where
    \begin{equation*}
        \eta_1 ( n, k ) = 4 ( n - 1 ) k^2 - ( 10 n^2 - 8 n + 4 ) k + 3 n^3 - 5 n^2 + 3n - 1,
    \end{equation*}
    which satisfies $ \eta_1 ( n, k ) > 0 $ since $ k < k_1 $.
    
    On the other hand, if $ k > k_2 $, take
    \begin{equation*}
        x_0 = r^{ - 1 }, \quad x_1 = x^* - r^{ - 1 } \quad \text{and} \quad x_2 = x^*.
    \end{equation*}
    Hence
    \begin{equation*}
        \begin{split}
            P_{ A_{ 2 } } ( x_0 ) &= - 3 ( 2 k + 1 ) + O ( r^{ - 1 } ) \\[1ex]
            2 n^4 P_{ A_{ 2 } } ( x_1 ) &= \eta_2(n,k) ( 2 n - 2 k + 1 ) ( 2 k + 1 ) + O ( r^{ - 1 } )
        \end{split}
    \end{equation*}
    as $ n, r \to \infty $, where
    \begin{equation*}
        \eta_2 ( n, k ) = 4 ( n - 1 ) k^2 + 2 ( n^2 + 4 n - 2 ) k - 3 n^3 + n^2 + 3 n - 1,
    \end{equation*}
    which also satisfies $ \eta_2 ( n, k ) > 0 $ since $ k > k_2 $. 

    In this case the Cauchy bound leads to unmanageable expressions. We avoid this by using a more direct method, paying the price of a worse bound. Since the strategy is similar for each condition, we only write here the argument to ensure condition $ P_{ A_{ 2 } } ( 1 - r^{ - 1 } ) < 0 $ when $ k < k_1 $. We have that
    \begin{equation*}
        \begin{split}
            Q ( r ) := r^4 P_{ A_{ 2 } } ( 1 - r^{ - 1 } ) =& - 3 ( 2 n - 2 k - 1 ) r^4 - 2 ( 2 k^2 - ( 2 k + 13 ) n + 11 k + 5 ) r^3 \\
            &+ 4 ( 2 k^2 - ( 3 k + 11 ) n + 5 k + 4 ) r^2 + 8 ( ( k + 4 ) n - 2 ) r - 8 ( n - 1 ).
        \end{split}
    \end{equation*}
    Since, in any case, $ 3 ( 2 n - 2 k - 1 ) > 1 $, a very conservative bound can be given by
    \begin{equation*}
        Q ( r ) < - r^4 + ( 2 k + 13 ) n r^3 + 4 ( 2 k^2 + 5 k + 4 ) r^2 + 8 n ( k + 4 ) r < - r^4 + ( 18 n^2 + 65 n + 16 ) r^3
    \end{equation*}
    since $ r \geq 2 $ and $ k < n $. Hence, it suffices to take
    \begin{equation*}
        r > 18 n^2 + 65 n + 16.
    \end{equation*}
    A similar computation holds for the rest of the conditions. 
\end{proof}

\section{Biharmonic Hopf hypersurfaces}\label{sectionbiharmonic}

In this section we focus on the biharmonic case. Note that equation~\eqref{equation} for the case $r=2$ reduces to
\begin{equation}\label{biharmonicequation}
    \tr \Sh_t^2 = 2(n+1).
\end{equation}
With this equation we recover the classification of biharmonic Hopf hypersurfaces in $\CPn$, first studied in~\cite{ichiyama2009} and corrected later in~\cite{sasahara2019classification}.

\begin{theorem}\cite[Theorem~2.4]{sasahara2019classification}\label{classificationbiharmonic}
    A Hopf hypersurface in $ \CPn $ is proper biharmonic if and only if it is an open part of a tube over a totally geodesic $ \mathbb{ C P }^{ n-p } $ in $ \CPn $, $ 1 \leq p \leq n - 1 $, with radius $ t $ given by
        \begin{equation*}
            \cos^2 t_{ \pm } = \frac{ 3 ( n + 1 ) - 2 p \pm \sqrt{ n^2 + 6 n - 4 ( n + 1 ) p + 4 p^2 + 5 } }{ 4 ( n + 1 ) }.
        \end{equation*}
\end{theorem}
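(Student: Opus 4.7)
The plan is to specialise Theorem~\ref{r-tensionfield} to $r = 2$: both $(r-2)$ terms in~\eqref{equationc} vanish and, with $c = 4$, proper biharmonicity is characterised by $\tr \Sh_t^2 = 2(n+1)$, which is precisely equation~\eqref{biharmonicequation}. One then sweeps through the six families of Hopf hypersurfaces in Table~\ref{tablecpn}, computing $\tr\Sh_t^2$ from the principal curvatures and multiplicities and solving this single equation case by case.

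Types $A_1$ and $A_2$ together realise all tubes of radius $t$ over a totally geodesic $\mathbb{CP}^{n-p}$, $1 \le p \le n-1$, with $p = 1$ giving $A_1$ and $p = n - k$, $2 \le p \le n-1$, giving $A_2$. A direct computation from Table~\ref{tablecpn} yields
\[
    \tr\Sh_t^2 = 4\cot^2 2t + 2(p-1)\cot^2 t + 2(n-p)\tan^2 t.
\]
Setting $u = \cos^2 t \in (0,1)$ and using $\cot^2 2t = (2u-1)^2/(4u(1-u))$, $\cot^2 t = u/(1-u)$, $\tan^2 t = (1-u)/u$, the equation $\tr\Sh_t^2 = 2(n+1)$ reduces, after multiplying through by $u(1-u)$, to the quadratic
\[
    4(n+1)u^2 - 2\bigl[3(n+1) - 2p\bigr]u + (2n - 2p + 1) = 0.
\]
Its discriminant simplifies to $4\bigl[n^2 + 6n - 4(n+1)p + 4p^2 + 5\bigr]$, so the two roots are exactly $\cos^2 t_{\pm}$ as stated, and a short check confirms that both lie in $(0,1)$ for every admissible $p$.

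For types $B$, $C$, $D$, $E$ the task is, conversely, to rule out solutions of $\tr \Sh_t^2 = 2(n+1)$ in the corresponding interval of radii. For type $B$, substituting $\alpha = 2\tan 2t$ and $\lambda_{1,2} = -\cot t, \tan t$ with multiplicities $1, n-1, n-1$ produces an elementary trigonometric expression that can be shown to exceed $2(n+1)$ on $(0, \pi/4)$. For types $C$, $D$, $E$, the four principal curvatures $\cot(t - k\pi/4)$, $k = 0, \ldots, 3$, are handled via the identities $\sum_k \cot(t - k\pi/4) = 4\cot 4t$ and $\sum_k \cot^2(t - k\pi/4) = 12 + 16\cot^2 4t$ recalled in the remark preceding this section, which turn $\tr\Sh_t^2 = 2(n+1)$ into a single-variable equation in $\cot^2 4t$ with no admissible root. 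The main obstacle is precisely this non-existence step, most delicate for $D$ and $E$; it was carried out in detail in~\cite{sasahara2019classification}, and combining that verification with our derivation of~\eqref{biharmonicequation} yields the classification.
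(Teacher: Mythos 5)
Your proposal is correct and takes essentially the same route the paper indicates: specialize Theorem~\ref{r-tensionfield} to $r=2$ to get $\tr\Sh_t^2 = 2(n+1)$ and sweep through Table~\ref{tablecpn}, the paper itself not writing out the case analysis but simply citing \cite{sasahara2019classification}. Your quadratic $4(n+1)u^2 - 2[3(n+1)-2p]u + (2n-2p+1) = 0$ and its discriminant are correct for types $A_1$/$A_2$, and the non-existence for types $B$, $C$, $D$, $E$ that you defer to the literature does check out (each reduces, in the variable $\cos^2 2t$, to a quadratic with negative discriminant for the admissible $n$).
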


Regarding stability, we obtain the following result.

\begin{theorem}\label{stability}
    Every homogeneous and proper biharmonic hypersurface in $\CPn$ is unstable. Moreover, there exists $C\equiv C(p)>0$ such that if $n-p > C$ the normal index of the biharmonic tube over $\mathbb{CP}^{n-p}$ in $\CPn$ of radius $t_+$ is exactly $1$.
\end{theorem}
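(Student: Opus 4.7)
The plan is to simplify the second variation~\eqref{hessian} using the geometry of $\CPn(4)$ and the biharmonic/Hopf structure of $M$, and then estimate $Q$ via the spectrum of $\Delta$ on $M$.

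\textbf{Reduction.} Since $M$ is proper biharmonic, \eqref{biharmonicequation} gives $\tr\Sh_t^2 = 2(n+1)$; since $\CPn(4)$ is Einstein with parallel Ricci tensor and $\mathrm{Ric}^N = 2(n+1)g$, we have $(\mathrm{Ric}^N(\xi))^\top = 0$ and $(\nabla^N_\xi \mathrm{Ric}^N)(\xi,\xi) = 0$; and constancy of the principal curvatures gives $\nabla\tr\Sh_t = 0$. A direct computation with the curvature formula~\eqref{curvaturetensor} using $W = -J\xi$ yields $\mathrm{tr}\,R^N(\xi, \Sh_t(\cdot), \cdot, \xi) = \tr\Sh_t + 3\alpha_t$, and an analogous computation handles the remaining curvature trace. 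Assembling~\eqref{hessian} then reduces $Q$ to the form
\begin{equation*}
Q(f) \,=\, \int_M (\Delta f)^2 \, \dv_g \,+\, 4\int_M |\Sh_t(\nabla f)|^2 \, \dv_g \,-\, C(t)\int_M f^2 \, \dv_g,
\end{equation*}
where $C(t)$ is a positive multiple of $\tr\Sh_t(\tr\Sh_t + 3\alpha_t)$.

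\textbf{Instability.} Testing against $f \equiv 1$, both non-negative integrals vanish and $Q(1) = -C(t)\,\mathrm{Vol}(M)$. Using Table~\ref{tablecpn} for the tube of radius $t$ over $\mathbb{CP}^{n-p}$,
\begin{equation*}
\tr\Sh_t = (2p-1)\cot t - (2n-2p+1)\tan t, \qquad \tr\Sh_t + 3\alpha_t = 2(p+1)\cot t - 2(n-p+2)\tan t.
\end{equation*}
Substituting the explicit values of $\cos^2 t_\pm$ from Theorem~\ref{classificationbiharmonic}, a careful algebraic simplification of the resulting nested radical shows that the product above is strictly positive at both $t_-$ and $t_+$. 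Hence $C(t_\pm) > 0$, $Q(1) < 0$ and every such hypersurface is unstable.

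\textbf{Normal index at $t_+$.} Expand $f = \sum_{j \geq 0} a_j \phi_j$ in an $L^2$-orthonormal basis of eigenfunctions of $\Delta$ with eigenvalues $0 = \mu_0 < \mu_1 \leq \mu_2 \leq \cdots$. Dropping the non-negative $|\Sh_t\nabla f|^2$-term and applying Parseval,
\begin{equation*}
Q(f) \,\geq\, \sum_{j \geq 0} (\mu_j^2 - C(t_+))\,a_j^2.
\end{equation*}
On the orthogonal complement of the constants every $j \geq 1$, so it suffices to show $\mu_1^2 > C(t_+)$. Theorem~\ref{firsteigenvalue} with Ricci lower bound $k = 2(n+1)$ gives $\mu_1 > (n+1) - \tfrac12|\tr\Sh_{t_+}|$. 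Asymptotic analysis of $\cos^2 t_+$ as $n - p \to \infty$ with $p$ fixed yields $\sin^2 t_+ \sim (2p-1)/(2n)$, whence $|\tr\Sh_{t_+}| = O(n^{-1/2})$ while $C(t_+)$ tends to a constant depending only on $p$. Thus $\mu_1^2 \sim (n+1)^2$ eventually dominates $C(t_+)$, providing an explicit threshold $C\equiv C(p)$ above which $Q>0$ on the complement of constants. Combined with the instability step, the negative subspace of $Q$ is exactly the line of constants, so the normal index equals $1$. The hardest part is the uniform sign analysis of $\tr\Sh_{t_\pm}(\tr\Sh_{t_\pm} + 3\alpha_{t_\pm})$, which requires cleanly handling the nested radical in $\cos^2 t_\pm$; the subsequent extraction of the explicit threshold $C(p)$ is then a routine computation tracking constants.
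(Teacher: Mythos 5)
Your proposal is correct and follows essentially the same route as the paper: the same reduction of $Q$ via the Einstein property of $\CPn$ and the biharmonic equation $\tr\Sh_t^2=2(n+1)$, the constant test function for instability, and the bound $\mu_1 > (n+1)-\tfrac12|\tr\Sh_{t_+}|$ from Theorem~\ref{firsteigenvalue} combined with the asymptotics $|\tr\Sh_{t_+}|=O(n^{-1/2})$ for the index count. The only (harmless) deviations are that you discard the non-negative term $4\int_M|\Sh_t(\nabla f)|^2\dv_g$ outright where the paper retains it as $4\lambda_{\min}^2\mu_j$, and that you flag the positivity of $\tr\Sh_{t_\pm}(\tr\Sh_{t_\pm}+3\alpha_{t_\pm})$ as a computation still to be carried out, which the paper likewise asserts without detail.
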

\begin{proof}
    Write $M_t$ for the biharmonic tube over $\mathbb{CP}^{n-p}$ in $\CPn$. Since $ \CPn $ endowed with the Fubini--Study metric is an Einstein manifold with $ { \rm Ric } ( \cdot, \cdot ) = 2 ( n + 1 ) g ( \cdot, \cdot ) $, we obtain the following relations:
    \begin{equation*}
            { \rm Ric } ( \xi, \xi ) = 2 ( n + 1 ), \quad
            ( { \rm Ric } ( \xi ) )^{ \top } = 0, \quad
            ( \nabla_{ \xi } { \rm Ric } ) ( \xi, \xi ) =0.
    \end{equation*}
    Moreover, for any local orthonormal frame $ \{ e_i \}_{ i = 0 }^{ 2 n - 1 } $ on $ \CPn $ we have
    \begin{equation*}
        \tr { \rm R } ( \xi, \cdot, \nabla_{ \xi } ( \cdot ), \xi ) = \sum_{ i, j = 0 }^{ 2 n - 1 } \langle \nabla_{ \xi } e_i, e_j \rangle { \rm R } ( \xi, e_i, e_j, \xi )=\sum_{ i = 0 }^{ 2 n - 1 } \langle \nabla_{ \xi } e_i, e_i \rangle { \rm R } ( \xi, e_i, e_i, \xi ) = 0.
    \end{equation*}
    Then, since
    \begin{equation*}
        \tr { \rm R } ( \xi, \Sh_t ( \cdot ), \cdot, \xi ) = \tr \Sh_t + 3 \alpha_t
    \end{equation*}
    and $ \tr \Sh_t $ is constant on $ M_t $, equations~\eqref{biharmonicequation} and~\eqref{hessian} yield
    \begin{equation*}
        Q ( f ) = \int_M [ ( \Delta f )^2 + 4 | \Sh_t ( \nabla f ) |^2 - 4 f^2 \tr \Sh_t ( \tr \Sh_t + 3 \alpha_t ) ] \dv_g.
    \end{equation*}
    By the Sturm-Liouville's decomposition we have that
    \begin{equation*}
        C^{ \infty } ( M_t ) = \oplus_{ i = 0 }^{ \infty } E_{ \mu_i }
    \end{equation*}
    in $L^2$, where $ E_{ \mu_i } $ denotes the eigenspace of the Laplacian on $ M_t $ with respect to the eigenvalue $ \mu_i $. For a detailed discussion on the orthogonal decomposition of eigenspaces of the Laplacian on compact manifolds see~\cite[Theorem~III.9.1]{chavel2006riemannian}.
    
    We see that for any constant function $f_0$ we have $Q(f_0)<0$. Since $\mu_0=0$ has multiplicity $1$, the index is at least $1$ and therefore $M_t$ is unstable. For any the other eigenfunction $f$ note that
    \begin{equation*}
        \int_M 4 | \Sh_t ( \nabla f ) |^2  \dv_g \geq 4 \lambda_{ \text{min} }^2 \int_M | \nabla f |^2  \dv_g = - 4 \lambda_{ \text{min} }^2 \int_M f \Delta f  \dv_g,
    \end{equation*}
    where $ \lambda_{ \text{min} }^2 $ represents the minimum between all squared principal curvatures. With this
    \begin{equation*}
        Q(f)\geq \int_M [(\Delta f)^2 - 4 \lambda_{\text{min}}^2 f \Delta f -4f^2 \tr \Sh_t(\tr \Sh_t+3\alpha_t)] \dv_g.
    \end{equation*}

     Since in our case we have limited knowledge about the spectrum of the Laplacian on $M_t$, we make use of the following bound for the first eigenvalue:
    \begin{equation*}
        \mu_1 \geq ( n + 1 ) - \tfrac{ 1 }{ 2 } | \tr \Sh_t |.
    \end{equation*}
    This is just an application for our particular case of Theorem~\ref{firsteigenvalue}. Hence, if we denote by $ f_1 $ the eigenfunction corresponding to the eigenvalue $ \mu_1 $, we have
    \begin{equation*}
        \begin{split}
            Q ( f_1 ) &\geq \int_M [ ( \mu_1 )^2 + 4 \lambda_{ \text{min} }^2 \mu_1 - 4 \tr \Sh_t ( \tr \Sh_t + 3 \alpha_t ) ] f^2 \dv_g  \\
            &\geq \int_M [ ( ( n + 1 ) - \tfrac{ 1 }{ 2 } | \tr \Sh_t | )^2 + 4 \lambda_{ \text{min} }^2 ( ( n + 1 ) - \tfrac{ 1 }{ 2 } | \tr \Sh_t | ) - 4 \tr \Sh_t ( \tr \Sh_t + 3 \alpha_t ) ] f^2 \dv_g.
        \end{split}
    \end{equation*}
    After rearranging, we see that for $ Q ( f_1 ) > 0 $ to hold, it is enough to have
    \begin{equation}\label{conditionstability}
        ( n + 1 ) ( 4 \lambda_{ \text{min} }^2 + n + 1 ) > \tfrac{ 15 }{ 4 } ( \tr \Sh_t )^2 + ( 2 \lambda_{ \text{min} }^2 + n + 1 ) | \tr \Sh_t | + 12 \alpha_t \tr \Sh_t.
    \end{equation}
    Now, if $ \mu_{ \ell } $ denotes the $ \ell^{ \text{th} } $-eigenvalue, and since $ \mu_{ \ell } > \mu_1 $, it follows that
    \begin{equation*}
        ( \mu_{ \ell } )^2 + 4 \lambda_{ \text{min} }^2 \mu_{ \ell } - 4 \tr \Sh_t ( \tr \Sh_t + 3 \alpha_t ) > ( \mu_1 )^2 + 4 \lambda_{ \text{min} }^2 \mu_1 - 4 \tr \Sh_t ( \tr \Sh_t + 3 \alpha_t ),
    \end{equation*}
    so to check if $ Q ( f_{ \ell } ) > 0 $ for any $ \ell \geq 1 $, it is enough to ensure~\eqref{conditionstability}.

    Note now that for the values $t_+$ given in~\ref{classificationbiharmonic} the following asymptotic relations hold:
    \begin{align*}
        &4 \cot^2 2t_+ = \tfrac{ 2 n }{ 2 p - 1 } + o ( n ) \,
        &&\cot^2 t_+ = \tfrac{ 2 n }{ 2 p - 1 } + o ( n ) \\
        &\tan^2 t_+ = \tfrac{ 2 p - 1 }{ 2 n } + o ( n^{ - 1 } ) \,
        &&\tr \Sh_{ t_+ } = \tfrac{ 2 \sqrt{ 4 p - 2 } }{ \sqrt{ n } } + o ( n^{ - \frac{ 1 }{ 2 } } )
    \end{align*}
    as $ n \to \infty $. Therefore, there exists $ n_1 > p $ such that $ \lambda_\text{min}^2 = \tan t_+ $ for $ n > n_1 $.
    Using this, we get
    \begin{equation*}
        ( n + 1 ) ( 4 \lambda_{ \text{min} }^2 + n + 1 ) = n^2 + o ( n^2 )
    \end{equation*}
    and
    \begin{equation*}
        \tfrac{ 15 }{ 4 } ( \tr \Sh_t )^2 + ( 2 \lambda_{ \text{min} }^2 + n + 1 ) | \tr \Sh_t | +12 \alpha_t \tr \Sh_t = 48+2\sqrt{4p-2}\sqrt{n} + o(\sqrt{n})
    \end{equation*}
    as $ n \to \infty $, obtaining the result.
\end{proof}

\bibliographystyle{plain}
\bibliography{Bibliography.bib}

\end{document}